\documentclass{amsart}
\usepackage{amssymb,amsmath,stmaryrd,mathrsfs}

\def\definetac{\newif\iftac}    
\ifx\tactrue\undefined
  \definetac
  \ifx\state\undefined\tacfalse\else\tactrue\fi
\fi

\def\definebeamer{\newif\ifbeamer}
\ifx\beamertrue\undefined
  \definebeamer
  \ifx\uncover\undefined\beamerfalse\else\beamertrue\fi
\fi

\def\definecref{\newif\ifcref}
\ifx\creftrue\undefined
  \definecref
  \creffalse
\fi

\iftac\else\usepackage{amsthm}\fi
\usepackage{tikz,tikz-cd}
\usetikzlibrary{arrows}
\ifbeamer\else
  \usepackage{enumitem}
  \usepackage{xcolor}
  \definecolor{darkgreen}{rgb}{0,0.45,0} 
  \iftac\else\usepackage[pagebackref,colorlinks,citecolor=darkgreen,linkcolor=darkgreen]{hyperref}\fi
  \renewcommand*{\backref}[1]{}
  \renewcommand*{\backrefalt}[4]{({%
      \ifcase #1 Not cited.%
            \or On p.~#2%
            \else On pp.~#2%
      \fi%
    })}
\fi
\usepackage{mathtools}          
\usepackage{graphics}           
\usepackage{ifmtarg}            
\usepackage{microtype}
\usepackage{braket}             

\usepackage{url}                
\usepackage{xspace}             
\ifcref\usepackage{cleveref,aliascnt}\fi
\usepackage[status=draft,author=]{fixme}
\fxusetheme{color}
\usepackage{mathpartir}



\makeatletter
\let\ea\expandafter

\def\mdef#1#2{\ea\ea\ea\gdef\ea\ea\noexpand#1\ea{\ea\ensuremath\ea{#2}\xspace}}
\def\alwaysmath#1{\ea\ea\ea\global\ea\ea\ea\let\ea\ea\csname your@#1\endcsname\csname #1\endcsname
  \ea\def\csname #1\endcsname{\ensuremath{\csname your@#1\endcsname}\xspace}}

\DeclareRobustCommand\widecheck[1]{{\mathpalette\@widecheck{#1}}}
\def\@widecheck#1#2{%
    \setbox\z@\hbox{\m@th$#1#2$}%
    \setbox\tw@\hbox{\m@th$#1%
       \widehat{%
          \vrule\@width\z@\@height\ht\z@
          \vrule\@height\z@\@width\wd\z@}$}%
    \dp\tw@-\ht\z@
    \@tempdima\ht\z@ \advance\@tempdima2\ht\tw@ \divide\@tempdima\thr@@
    \setbox\tw@\hbox{%
       \raise\@tempdima\hbox{\scalebox{1}[-1]{\lower\@tempdima\box
\tw@}}}%
    {\ooalign{\box\tw@ \cr \box\z@}}}


\newcount\foreachcount

\def\foreachletter#1#2#3{\foreachcount=#1
  \ea\loop\ea\ea\ea#3\@alph\foreachcount
  \advance\foreachcount by 1
  \ifnum\foreachcount<#2\repeat}

\def\foreachLetter#1#2#3{\foreachcount=#1
  \ea\loop\ea\ea\ea#3\@Alph\foreachcount
  \advance\foreachcount by 1
  \ifnum\foreachcount<#2\repeat}

\def\definescr#1{\ea\gdef\csname s#1\endcsname{\ensuremath{\mathscr{#1}}\xspace}}
\foreachLetter{1}{27}{\definescr}
\def\definecal#1{\ea\gdef\csname c#1\endcsname{\ensuremath{\mathcal{#1}}\xspace}}
\foreachLetter{1}{27}{\definecal}
\def\definebold#1{\ea\gdef\csname b#1\endcsname{\ensuremath{\mathbf{#1}}\xspace}}
\foreachLetter{1}{27}{\definebold}
\def\definebb#1{\ea\gdef\csname d#1\endcsname{\ensuremath{\mathbb{#1}}\xspace}}
\foreachLetter{1}{27}{\definebb}
\def\definefrak#1{\ea\gdef\csname f#1\endcsname{\ensuremath{\mathfrak{#1}}\xspace}}
\foreachletter{1}{9}{\definefrak}
\foreachletter{10}{27}{\definefrak}
\foreachLetter{1}{27}{\definefrak}
\def\definesf#1{\ea\gdef\csname i#1\endcsname{\ensuremath{\mathsf{#1}}\xspace}}
\foreachletter{1}{6}{\definesf}
\foreachletter{7}{14}{\definesf}
\foreachletter{15}{27}{\definesf}
\foreachLetter{1}{27}{\definesf}
\def\definebar#1{\ea\gdef\csname #1bar\endcsname{\ensuremath{\overline{#1}}\xspace}}
\foreachLetter{1}{27}{\definebar}
\foreachletter{1}{8}{\definebar} 
\foreachletter{9}{15}{\definebar} 
\foreachletter{16}{27}{\definebar}
\def\definetil#1{\ea\gdef\csname #1til\endcsname{\ensuremath{\widetilde{#1}}\xspace}}
\foreachLetter{1}{27}{\definetil}
\foreachletter{1}{27}{\definetil}
\def\definehat#1{\ea\gdef\csname #1hat\endcsname{\ensuremath{\widehat{#1}}\xspace}}
\foreachLetter{1}{27}{\definehat}
\foreachletter{1}{27}{\definehat}
\def\definechk#1{\ea\gdef\csname #1chk\endcsname{\ensuremath{\widecheck{#1}}\xspace}}
\foreachLetter{1}{27}{\definechk}
\foreachletter{1}{27}{\definechk}
\def\defineul#1{\ea\gdef\csname u#1\endcsname{\ensuremath{\underline{#1}}\xspace}}
\foreachLetter{1}{27}{\defineul}
\foreachletter{1}{27}{\defineul}

\def\autofmt@n#1\autofmt@end{\mathrm{#1}}
\def\autofmt@b#1\autofmt@end{\mathbf{#1}}
\def\autofmt@d#1#2\autofmt@end{\mathbb{#1}\mathsf{#2}}
\def\autofmt@c#1#2\autofmt@end{\mathcal{#1}\mathit{#2}}
\def\autofmt@s#1#2\autofmt@end{\mathscr{#1}\mathit{#2}}
\def\autofmt@f#1\autofmt@end{\mathsf{#1}}
\def\autofmt@k#1\autofmt@end{\mathfrak{#1}}
\def\autofmt@u#1\autofmt@end{\underline{\smash{\mathsf{#1}}}}
\def\autofmt@U#1\autofmt@end{\underline{\underline{\smash{\mathsf{#1}}}}}
\def\autofmt@h#1\autofmt@end{\widehat{#1}}
\def\autofmt@r#1\autofmt@end{\overline{#1}}
\def\autofmt@t#1\autofmt@end{\widetilde{#1}}
\def\autofmt@k#1\autofmt@end{\check{#1}}

\def\auto@drop#1{}
\def\autodef#1{\ea\ea\ea\@autodef\ea\ea\ea#1\ea\auto@drop\string#1\autodef@end}
\def\@autodef#1#2#3\autodef@end{%
  \ea\def\ea#1\ea{\ea\ensuremath\ea{\csname autofmt@#2\endcsname#3\autofmt@end}\xspace}}
\def\autodefs@end{blarg!}
\def\autodefs#1{\@autodefs#1\autodefs@end}
\def\@autodefs#1{\ifx#1\autodefs@end%
  \def\autodefs@next{}%
  \else%
  \def\autodefs@next{\autodef#1\@autodefs}%
  \fi\autodefs@next}


\DeclareSymbolFont{bbold}{U}{bbold}{m}{n}
\DeclareSymbolFontAlphabet{\mathbbb}{bbold}




\mdef\delbar{\overline{\partial}}

\mdef\hf{\textstyle\frac12 }
\mdef\thrd{\textstyle\frac13 }
\mdef\qtr{\textstyle\frac14 }

\mdef\Id{\mathrm{Id}}
\mdef\id{\mathrm{id}}
\alwaysmath{ell}
\alwaysmath{infty}

\alwaysmath{odot}
\def\frc#1/#2.{\frac{#1}{#2}}   
\mdef\ten{\mathrel{\otimes}}

\mdef\sqten{\mathrel{\boxtimes}}



\DeclareFontFamily{U}{mathb}{\hyphenchar\font45}
\DeclareFontShape{U}{mathb}{m}{n}{
      <5> <6> <7> <8> <9> <10> gen * mathb
      <10.95> mathb10 <12> <14.4> <17.28> <20.74> <24.88> mathb12
      }{}
\DeclareSymbolFont{mathb}{U}{mathb}{m}{n}
\DeclareFontSubstitution{U}{mathb}{m}{n}
\DeclareMathSymbol{\dotplus}       {2}{mathb}{"00}
\DeclareMathSymbol{\dotdiv}        {2}{mathb}{"01}
\DeclareMathSymbol{\dottimes}      {2}{mathb}{"02}
\DeclareMathSymbol{\divdot}        {2}{mathb}{"03}
\DeclareMathSymbol{\udot}          {2}{mathb}{"04}
\DeclareMathSymbol{\square}        {2}{mathb}{"05}
\DeclareMathSymbol{\Asterisk}      {2}{mathb}{"06}
\DeclareMathSymbol{\bigast}        {1}{mathb}{"06}
\DeclareMathSymbol{\coAsterisk}    {2}{mathb}{"07}
\DeclareMathSymbol{\bigcoast}      {1}{mathb}{"07}
\DeclareMathSymbol{\circplus}      {2}{mathb}{"08}
\DeclareMathSymbol{\pluscirc}      {2}{mathb}{"09}
\DeclareMathSymbol{\convolution}   {2}{mathb}{"0A}
\DeclareMathSymbol{\divideontimes} {2}{mathb}{"0B}
\DeclareMathSymbol{\blackdiamond}  {2}{mathb}{"0C}
\DeclareMathSymbol{\sqbullet}      {2}{mathb}{"0D}
\DeclareMathSymbol{\bigstar}       {2}{mathb}{"0E}
\DeclareMathSymbol{\bigvarstar}    {2}{mathb}{"0F}
\DeclareMathSymbol{\corresponds}   {3}{mathb}{"1D}
\DeclareMathSymbol{\updownarrows}          {3}{mathb}{"D6}
\DeclareMathSymbol{\downuparrows}          {3}{mathb}{"D7}
\DeclareMathSymbol{\Lsh}                   {3}{mathb}{"E8}
\DeclareMathSymbol{\Rsh}                   {3}{mathb}{"E9}
\DeclareMathSymbol{\dlsh}                  {3}{mathb}{"EA}
\DeclareMathSymbol{\drsh}                  {3}{mathb}{"EB}
\DeclareMathSymbol{\looparrowdownleft}     {3}{mathb}{"EE}
\DeclareMathSymbol{\looparrowdownright}    {3}{mathb}{"EF}
\DeclareMathSymbol{\curvearrowleftright}   {3}{mathb}{"F2}
\DeclareMathSymbol{\curvearrowbotleft}     {3}{mathb}{"F3}
\DeclareMathSymbol{\curvearrowbotright}    {3}{mathb}{"F4}
\DeclareMathSymbol{\curvearrowbotleftright}{3}{mathb}{"F5}
\DeclareMathSymbol{\leftsquigarrow}        {3}{mathb}{"F8}
\DeclareMathSymbol{\rightsquigarrow}       {3}{mathb}{"F9}
\DeclareMathSymbol{\leftrightsquigarrow}   {3}{mathb}{"FA}
\DeclareMathSymbol{\lefttorightarrow}      {3}{mathb}{"FC}
\DeclareMathSymbol{\righttoleftarrow}      {3}{mathb}{"FD}
\DeclareMathSymbol{\uptodownarrow}         {3}{mathb}{"FE}
\DeclareMathSymbol{\downtouparrow}         {3}{mathb}{"FF}
\DeclareMathSymbol{\varhash}       {0}{mathb}{"23}


\DeclareMathOperator\colim{colim}



\mdef\we{\overset{\sim}{\longrightarrow}}
\mdef\leftwe{\overset{\sim}{\longleftarrow}}

\let\fib\twoheadrightarrow




\def\rightarrowtailfill@{\arrowfill@{\Yright\joinrel\relbar}\relbar\rightarrow}
\newcommand\xrightarrowtail[2][]{\ext@arrow 0055{\rightarrowtailfill@}{#1}{#2}}

\def\twoheadrightarrowfill@{\arrowfill@{\relbar\joinrel\relbar}\relbar\twoheadrightarrow}
\newcommand\xtwoheadrightarrow[2][]{\ext@arrow 0055{\twoheadrightarrowfill@}{#1}{#2}}


\def\slashedarrowfill@#1#2#3#4#5{%
  $\m@th\thickmuskip0mu\medmuskip\thickmuskip\thinmuskip\thickmuskip
   \relax#5#1\mkern-7mu%
   \cleaders\hbox{$#5\mkern-2mu#2\mkern-2mu$}\hfill
   \mathclap{#3}\mathclap{#2}%
   \cleaders\hbox{$#5\mkern-2mu#2\mkern-2mu$}\hfill
   \mkern-7mu#4$%
}
\def\rightslashedarrowfill@{%
  \slashedarrowfill@\relbar\relbar\mapstochar\rightarrow}
\newcommand\xslashedrightarrow[2][]{%
  \ext@arrow 0055{\rightslashedarrowfill@}{#1}{#2}}
\mdef\hto{\xslashedrightarrow{}}
\mdef\htoo{\xslashedrightarrow{\quad}}





\def\jd#1{\@jd#1\ej}
\def\@jd#1|-#2\ej{\@@jd#1,,\;\vdash\;\left(#2\right)}
\def\@@jd#1,{\@ifmtarg{#1}{\let\next=\relax}{\left(#1\right)\let\next=\@@@jd}\next}
\def\@@@jd#1,{\@ifmtarg{#1}{\let\next=\relax}{,\,\left(#1\right)\let\next=\@@@jd}\next}
\def\jdm#1{\@jdm#1\ej}
\def\@jdm#1|-#2\ej{\@@jd#1,,\\\vdash\;\left(#2\right)}

\long\def\my@drawfill#1#2;{%
\@skipfalse
\fill[#1,draw=none] #2;
\@skiptrue
\draw[#1,fill=none] #2;
}
\newif\if@skip
\newcommand{\skipit}[1]{\if@skip\else#1\fi}
\newcommand{\drawfill}[1][]{\my@drawfill{#1}}

\newcounter{nodemaker}
\setcounter{nodemaker}{0}


\newif\ifhyperref
\@ifpackageloaded{hyperref}{\hyperreftrue}{\hyperreffalse}
\iftac
  \let\your@state\state
  \def\state#1{\my@state#1}
  \def\my@state#1.{\gdef\currthmtype{#1}\your@state{#1.}}
  \let\your@staterm\staterm
  \def\staterm#1{\my@staterm#1}
  \def\my@staterm#1.{\gdef\currthmtype{#1}\your@staterm{#1.}}
  \let\@defthm\newtheorem
  \def\switchtotheoremrm{\let\@defthm\newtheoremrm}
  \def\defthm#1#2#3{\@defthm{#1}{#2}} 
  \let\your@section\section
  \def\section{\gdef\currthmtype{section}\your@section}
  \def\currthmtype{}
  \ifhyperref
    \def\autoref#1{\ref*{label@name@#1}~\ref{#1}}
  \else
    \def\autoref#1{\ref{label@name@#1}~\ref{#1}}
  \fi
  \AtBeginDocument{%
    \let\old@label\label%
    \def\label#1{%
      {\let\your@currentlabel\@currentlabel%
        \edef\@currentlabel{\currthmtype}%
        \old@label{label@name@#1}}%
      \old@label{#1}}
  }
  \let\cref\autoref
\else\ifcref
  \def\defthm#1#2#3{%
    \newaliascnt{#1}{thm}
    \newtheorem{#1}[#1]{#2}
    \aliascntresetthe{#1}
    \crefname{#1}{#2}{#3}
  }
\else
  \ifhyperref
    \def\defthm#1#2#3{
      \newtheorem{#1}{#2}[section]%
      \expandafter\def\csname #1autorefname\endcsname{#2}%
      \expandafter\let\csname c@#1\endcsname\c@thm}
  \else
    \def\defthm#1#2#3{\newtheorem{#1}[thm]{#2}} 
    \ifx\SK@label\undefined\let\SK@label\label\fi
    \let\old@label\label
    \let\your@thm\@thm
    \def\@thm#1#2#3{\gdef\currthmtype{#3}\your@thm{#1}{#2}{#3}}
    \def\currthmtype{}
    \def\label#1{{\let\your@currentlabel\@currentlabel\def\@currentlabel%
        {\currthmtype~\your@currentlabel}%
        \SK@label{#1@}}\old@label{#1}}
    \def\autoref#1{\ref{#1@}}
  \fi
  \let\cref\autoref
\fi\fi

\newtheorem{thm}{Theorem}[section]
\ifcref
  \crefname{thm}{Theorem}{Theorems}
\else
  
\fi
\defthm{cor}{Corollary}{Corollaries}
\defthm{prop}{Proposition}{Propositions}
\defthm{lem}{Lemma}{Lemmas}
\defthm{sch}{Scholium}{Scholia}
\defthm{assume}{Assumption}{Assumptions}
\defthm{claim}{Claim}{Claims}
\defthm{conj}{Conjecture}{Conjectures}
\defthm{hyp}{Hypothesis}{Hypotheses}
\iftac\switchtotheoremrm\else\theoremstyle{definition}\fi
\defthm{defn}{Definition}{Definitions}
\defthm{notn}{Notation}{Notations}
\iftac\switchtotheoremrm\else\theoremstyle{remark}\fi
\defthm{rmk}{Remark}{Remarks}
\defthm{eg}{Example}{Examples}
\defthm{egs}{Examples}{Examples}
\defthm{ex}{Exercise}{Exercises}
\defthm{ceg}{Counterexample}{Counterexamples}

\ifcref
  \crefformat{section}{\S#2#1#3}
  \Crefformat{section}{Section~#2#1#3}
  \crefrangeformat{section}{\S\S#3#1#4--#5#2#6}
  \Crefrangeformat{section}{Sections~#3#1#4--#5#2#6}
  \crefmultiformat{section}{\S\S#2#1#3}{ and~#2#1#3}{, #2#1#3}{ and~#2#1#3}
  \Crefmultiformat{section}{Sections~#2#1#3}{ and~#2#1#3}{, #2#1#3}{ and~#2#1#3}
  \crefrangemultiformat{section}{\S\S#3#1#4--#5#2#6}{ and~#3#1#4--#5#2#6}{, #3#1#4--#5#2#6}{ and~#3#1#4--#5#2#6}
  \Crefrangemultiformat{section}{Sections~#3#1#4--#5#2#6}{ and~#3#1#4--#5#2#6}{, #3#1#4--#5#2#6}{ and~#3#1#4--#5#2#6}
  \crefformat{appendix}{Appendix~#2#1#3}
  \Crefformat{appendix}{Appendix~#2#1#3}
  \crefrangeformat{appendix}{Appendices~#3#1#4--#5#2#6}
  \Crefrangeformat{appendix}{Appendices~#3#1#4--#5#2#6}
  \crefmultiformat{appendix}{Appendices~#2#1#3}{ and~#2#1#3}{, #2#1#3}{ and~#2#1#3}
  \Crefmultiformat{appendix}{Appendices~#2#1#3}{ and~#2#1#3}{, #2#1#3}{ and~#2#1#3}
  \crefrangemultiformat{appendix}{Appendices~#3#1#4--#5#2#6}{ and~#3#1#4--#5#2#6}{, #3#1#4--#5#2#6}{ and~#3#1#4--#5#2#6}
  \Crefrangemultiformat{appendix}{Appendices~#3#1#4--#5#2#6}{ and~#3#1#4--#5#2#6}{, #3#1#4--#5#2#6}{ and~#3#1#4--#5#2#6}
  \crefformat{subappendix}{\S#2#1#3}
  \Crefformat{subappendix}{Section~#2#1#3}
  \crefrangeformat{subappendix}{\S\S#3#1#4--#5#2#6}
  \Crefrangeformat{subappendix}{Sections~#3#1#4--#5#2#6}
  \crefmultiformat{subappendix}{\S\S#2#1#3}{ and~#2#1#3}{, #2#1#3}{ and~#2#1#3}
  \Crefmultiformat{subappendix}{Sections~#2#1#3}{ and~#2#1#3}{, #2#1#3}{ and~#2#1#3}
  \crefrangemultiformat{subappendix}{\S\S#3#1#4--#5#2#6}{ and~#3#1#4--#5#2#6}{, #3#1#4--#5#2#6}{ and~#3#1#4--#5#2#6}
  \Crefrangemultiformat{subappendix}{Sections~#3#1#4--#5#2#6}{ and~#3#1#4--#5#2#6}{, #3#1#4--#5#2#6}{ and~#3#1#4--#5#2#6}
  \crefname{part}{Part}{Parts}
  \crefname{figure}{Figure}{Figures}
\fi

\iftac
  
  \let\your@endproof\endproof
  \def\my@endproof{\your@endproof}
  \def\endproof{\my@endproof\gdef\my@endproof{\your@endproof}}
  \def\qedhere{\tag*{\endproofbox}\gdef\my@endproof{\relax}}
\fi

\iftac
  \def\pr@@f[#1]{\subsubsection*{\sc #1.}}
\fi

\def\thmqedhere{\expandafter\csname\csname @currenvir\endcsname @qed\endcsname}

\ifbeamer\else

\fi

\ifbeamer\else
  \setitemize[1]{leftmargin=2em}
  \setenumerate[1]{leftmargin=*}
\fi

\iftac
  \let\c@equation\c@subsection
\else
  \let\c@equation\c@thm
\fi
\numberwithin{equation}{section}

\ifcref\else
  \@ifpackageloaded{mathtools}{\mathtoolsset{showonlyrefs,showmanualtags}}{}
\fi

\alwaysmath{alpha}
\alwaysmath{beta}
\alwaysmath{gamma}
\alwaysmath{Gamma}
\alwaysmath{delta}
\alwaysmath{Delta}
\alwaysmath{epsilon}
\mdef\ep{\varepsilon}
\alwaysmath{zeta}
\alwaysmath{eta}
\alwaysmath{theta}
\alwaysmath{Theta}
\alwaysmath{iota}
\alwaysmath{kappa}
\alwaysmath{lambda}
\alwaysmath{Lambda}
\alwaysmath{mu}
\alwaysmath{nu}
\alwaysmath{xi}
\alwaysmath{pi}
\alwaysmath{rho}
\alwaysmath{sigma}
\alwaysmath{Sigma}
\alwaysmath{tau}
\alwaysmath{upsilon}
\alwaysmath{Upsilon}
\alwaysmath{phi}
\alwaysmath{Pi}
\alwaysmath{Phi}
\mdef\ph{\varphi}
\alwaysmath{chi}
\alwaysmath{psi}
\alwaysmath{Psi}
\alwaysmath{omega}
\alwaysmath{Omega}
\let\al\alpha
\let\be\beta
\let\gm\gamma

\let\de\delta

%

\makeatother


\usepackage{hyperref}
\usepackage[all]{xy}
\usepackage{graphicx}
\title{On a model invariance problem in homotopy type theory}
\author{Anthony Bordg}
\thanks{This material is based upon work supported by grant GA CR P201/12/G028}

\newcommand{\fsix}[6]{\begin{array}{rcl} #1&\longrightarrow &#2\\ #3&\longmapsto &#4\\ #5&\longmapsto &#6\\\\ \end{array}} 

\newcommand{\pushoutcorner}[1][dr]{\save*!/#1+1.2pc/#1:(1,-1)@^{|-}\restore}
\newdir{ >}{{}*!/-5pt/\dir{>}}
\newcommand{\pullbackcorner}[1][dr]{\save*!/#1-1.2pc/#1:(-1,1)@^{|-}\restore}

\def\Gpd{\mathbf{Gpd}}
\def\GGpd{\mathbf{Gpd}^{\mathbb{Z}_2}}

\def\Z2{\mathbb{Z}_2}
\def\f{_{\mathbf{f}}}
\def\0{\mathbf{0}}
\def\1{\mathbf{1}}
\def\cI{\mathbf{\check{I}}}
\def\tr{\bigtriangledown}

\begin{document}

\begin{abstract}
	In this article the author endows the functor category $[\mathbf{B}(\mathbb{Z}_2),\Gpd]$ with the structure of a type-theoretic fibration category with a \emph{univalent universe} using the so-called injective model structure. It gives us a new model of Martin-L\"of type theory with dependent sums, dependent products, identity types and a univalent universe. This model, together with the model (developed by the author in an other work) in the same underlying category together with the very same universe that turned out to be provably not univalent with respect to projective fibrations, provides an example of two Quillen equivalent model categories that host different models of type theory. Thus, we provide a counterexample to the \emph{model invariance problem} formulated by Michael Shulman. 
\end{abstract}
\maketitle

\section{Introduction}
\label{sec:introduction}
	
This article is a contribution to the ongoing effort to find models of the \emph{Univalent Foundations} \cite{bordg:uf} introduced by Vladimir Voevodsky. In particular, Shulman \cite{shulman:invdia, shulman:elreedy, shulman:eiuniv} with his notion of \emph{type-theoretic fibration categories} prompted the development of models of the \emph{Univalence Axiom} in functor categories. In \cite{shulman:invdia} Shulman endowed the functor category $[\mathcal{D}, \mathscr{C}]$, where $\mathcal{D}$ is an \emph{inverse category} and $\mathscr{C}$ is a type-theoretic fibration category with a univalent universe, with the structure of a type-theoretic fibration category with a univalent universe by using the so-called \emph{Reedy model structure}. In \cite{shulman:elreedy} Shulman endowed the category $[\mathcal{D}, \mathbf{sSet}]$, where $\mathcal{D}$ is any \emph{elegant Reedy category} and $\mathbf{sSet}$ is the category of simplicial sets, with the structure of a type-theoretic fibration category with a univalent universe, again by using the Reedy model structure. The reader should note that inverse categories are particular cases of elegant Reedy categories that are themselves particular cases of (strict) \emph{Reedy categories}. Since Reedy categories do not allow non-trivial isomorphism, this kind of index categories has strong limitations. Moreover, it is useful to note that the class of elegant Reedy categories is precisely the class of index categories for which the Reedy model structure and the so-called \emph{injective model structure} on a functor category are the same. Thus, despite technical challenges that might be difficult to overcome the injective model structure on a functor category seems a reasonable candidate to find models of the Univalence Axiom in functor categories. However, in \cite{shulman:eiuniv} Shulman used a different model structure to give models in EI-diagrams. An EI-category is a category where every endomorphism is an isomorphism, groups are particular interesting cases. According to Shulman ``[He] constructed a model in a certain model
category that presents the homotopy theory of presheaves on an
EI-category, but it is not the injective model structure on a functor
category. In the case of [the target category $\Gpd$ of groupoids and the index category] $\mathbb{Z}/2\mathbb{Z}$ or any other group $G$, [his] model specializes to the slice category $\Gpd/\mathbf{B}G$, which is well-known to be Quillen equivalent to, but not identical to, the injective model structure on $\Gpd^G$'' (private communication). Before Shulman's work in \cite{shulman:eiuniv} the author had worked out in his PhD thesis \cite[Ch.5]{bordg:thesis} the details of a type-theoretic fibration category  with a univalent universe using the injective model structure on $[\mathbf{B}(\mathbb{Z}_2), \Gpd]$, overcoming the technical challenge of the presence of a non-trivial automorphism in the index category at least in the simple but important case of the target category $\Gpd$ with its univalent universe of sets (discrete groupoids). \\
Moreover, since it is well known that the projective and injective model categories are Quillen equivalent, our new univalent universe, together with our proof \cite{bordg:inadequacy} that this same universe is not univalent with respect to the projective structure on the same underlying category, provides a counterexample to the \emph{model invariance problem}\footnote{\url{https://ncatlab.org/homotopytypetheory/show/model+invariance+problem}} formulated by Michael Shulman (as part of a list of open problems\footnote{\url{https://ncatlab.org/homotopytypetheory/show/open+problems}} in \emph{Homotopy Type Theory}): ``Show that the interpretation of type theory is independent of the model category chosen to present an (infinity,1)-category. Of course, the details depend on the chosen type theory.''. In the present work, we consider a type theory with a unit type, dependent sums, dependent products, intensional identity types and a universe type, and by an interpretation of that type theory we mean precisely Shulman's notion of a type-theoretic fibration category with a universe which is intended to give an interpretation of such a type theory. 

\subsection*{Acknowledgments}

I would like to thank Andr\'e Hirschowitz, Peter LeFanu Lumsdaine, and Michael Shulman for helpful discussions.

\section{The injective type-theoretic fibration structure on $\GGpd$}
\label{sec:ttfc}

We denote the functor category $[\mathbf{B}(\mathbb{Z}_2), \Gpd]$ simply by $\GGpd$. The reader should note that an object in $\GGpd$ is nothing but a groupoid $A$ equipped with an involution $\al\colon A\rightarrow A$, \textit{i.e.} an automorphism satisfying the equation $\al\circ \al = id$. A morphism $f\colon A\rightarrow B$ in $\GGpd$ is nothing but an equivariant functor, \textit{i.e.} $f$ satisfies $f\circ\al = \be\circ f$. \\
We recall the definitions of \emph{type-theoretic fibration category} \cite[Def.7.1]{shulman:eiuniv}, \emph{type-theoretic model category} \cite[Def.2.12]{shulman:invdia} and their link \cite[Prop.2.13]{shulman:invdia}.

\begin{defn}
	\label{def:ttfc}
	A \textbf{type-theoretic fibration category} is a category $\mathscr{C}$ with :
	\begin{enumerate}[leftmargin=*,label=(\arabic*)]
		\item A terminal object 1.\label{item:cat1}
		\item A subcategory of \textbf{fibrations} containing all the isomorphisms and all the morphisms with codomain 1. A morphism is called an \textbf{acyclic cofibration} if it has the left lifting property with respect to all fibrations.\label{item:cat2}
		\item All pullbacks of fibrations exist and are fibrations.\label{item:cat3}
		\item The pullback functor along any fibration has a right adjoint that preserves fibrations. \label{item:cat4}
		\item Every morphism factors as an acyclic cofibration followed by a fibration.\label{item:cat5}
	\end{enumerate}
\end{defn}

\begin{rmk}
	This category-theoretic structure corresponds to an interpretation into a category of a type theory with a unit type, dependent sums, dependent products, and intensional identity types.
\end{rmk}	

\begin{defn}
\label{def:ttmc}
	A \textbf{type-theoretic model category} is a model category $\mathscr{M}$ satisfying the following additional properties.
	\begin{enumerate}[leftmargin=*,label=(\roman*)]
		\item The pullback functor along a fibration preserves acyclic cofibrations. \label{item:ttmc1}
		\item The pullback functor $g^*$ along a fibration $g$ has a right adjoint $\Pi_g$. \label{item:ttmc2}
	\end{enumerate}
\end{defn}

\begin{prop}
\label{prop:ttmc-ttfc}
	If $\mathscr{M}$ is a type-theoretic model category, then its full subcategory $\mathscr{M}\f$ of fibrant objects is a type-theoretic fibration category.
\end{prop}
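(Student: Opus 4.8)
The plan is to verify the five conditions of Definition~\ref{def:ttfc} for the full subcategory $\mathscr{M}\f$, declaring a morphism of $\mathscr{M}\f$ to be a \emph{fibration} exactly when it is a fibration of the ambient model category $\mathscr{M}$. Several conditions are essentially immediate. The terminal object $1$ of $\mathscr{M}$ is fibrant, since $1\to 1$ is an isomorphism and hence a fibration, so it serves as the terminal object of $\mathscr{M}\f$, giving condition~(1). The fibrations are closed under composition and contain the isomorphisms because this already holds in any model category; and since every object of $\mathscr{M}\f$ is fibrant \emph{by definition}, every morphism with codomain $1$ is a fibration, giving condition~(2).

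For condition~(3) I would use that $\mathscr{M}$, being a model category, has all pullbacks and that the pullback of a fibration is a fibration; it then only remains to see that such a pullback stays inside $\mathscr{M}\f$. If $p\colon E\to B$ is a fibration and $f\colon C\to B$ is any morphism with $C$ fibrant, then the projection $C\times_B E\to C$ is a fibration, so the composite $C\times_B E\to C\to 1$ is again a fibration and $C\times_B E$ is fibrant. Condition~(5) is handled in the same spirit: given $f\colon A\to B$ in $\mathscr{M}\f$, factor it in $\mathscr{M}$ as an acyclic cofibration $j\colon A\to C$ followed by a fibration $p\colon C\to B$. Since $B$ is fibrant and $p$ is a fibration, $C$ is fibrant, so the factorization lives in $\mathscr{M}\f$; moreover an acyclic cofibration of $\mathscr{M}$ has the left lifting property against \emph{all} fibrations, in particular against the fibrations of $\mathscr{M}\f$, so $j$ is an acyclic cofibration in the sense of Definition~\ref{def:ttfc}.

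The substance of the proof is condition~(4), and here the two extra hypotheses of Definition~\ref{def:ttmc} enter essentially. Hypothesis~(ii) already supplies, for a fibration $g\colon A\to B$, a right adjoint $\Pi_g$ to the pullback functor $g^*$; I would restrict this adjunction to the slices of fibrant objects, which is legitimate since $g^*$ preserves fibrancy by the computation just made for condition~(3). The point requiring hypothesis~(i) is that $\Pi_g$ \emph{preserves fibrations}. To prove this I would test a candidate $\Pi_g(p)$ against an arbitrary acyclic cofibration $j$ by transposing across $g^*\dashv\Pi_g$: a lifting problem of $j$ against $\Pi_g(p)$ corresponds to a lifting problem of $g^*(j)$ against $p$. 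Hypothesis~(i) guarantees that $g^*(j)$ is again an acyclic cofibration, and since $p$ is a fibration the transposed problem has a solution; transporting it back solves the original. Hence $\Pi_g(p)$ has the right lifting property against all acyclic cofibrations and is a fibration, and as before it is fibrant because it lies over the fibrant object $B$ via a fibration.

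The main obstacle, and the only step demanding real care, is this last point: one must set up the adjunction $g^*\dashv\Pi_g$ on precisely the subcategories of fibrant objects over $A$ and over $B$, check that both functors respect these subcategories, and verify that the transposition of lifting problems is compatible with the slice structure over $A$ and over $B$. Once this bookkeeping of the slice categories is in place, conditions~(1), (2), (3) and~(5) are routine consequences of the model-category axioms, while hypotheses~(i) and~(ii) of Definition~\ref{def:ttmc} are exactly what is needed to promote the model-categorical right adjoint to the fibration-preserving right adjoint demanded by condition~(4).
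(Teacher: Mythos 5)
Your proof is correct and follows essentially the same route as the paper's: conditions \ref{item:cat1}, \ref{item:cat2}, \ref{item:cat3} and \ref{item:cat5} are verified by the routine observation that fibrancy is inherited along fibrations over fibrant objects, and condition \ref{item:cat4} is obtained from hypotheses \ref{item:ttmc1} and \ref{item:ttmc2} of Definition~\ref{def:ttmc} via the adjunction $g^*\dashv\Pi_g$, transposing lifting problems so that ``$\Pi_g$ preserves fibrations'' becomes ``$g^*$ preserves acyclic cofibrations''. The only difference is one of detail: you spell out the transposition argument and the slice-category bookkeeping that the paper compresses into a single sentence.
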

\begin{proof}
\label{proof:prop:ttmc-ttfc}
	Since 1 is a fibrant object, \ref{item:cat1} is satisfied. The wide subcategory of $\mathscr{M}\f$ with fibrations as morphisms contains all the isomorphisms between fibrant objects and by definition all the morphisms with codomain 1, so \ref{item:cat2} holds. The fiber product associated with two fibrations between fibrant objects is fibrant, hence pullbacks of fibrations still exist and are fibrations. So, \ref{item:cat3} is satisfied. Since the middleman in the factorization of any morphism between fibrant objects by an acyclic cofibration followed by a fibration is a fibrant object, \ref{item:cat5} is still true. Last, by adjunction $\Pi_g$ preserves fibrations if and only if $g^*$ preserves acyclic cofibrations, so by \ref{item:ttmc1} we have \ref{item:cat4}.  
\end{proof}

The goal of this section consists in proving that $\GGpd$ equipped with the so-called \emph{injective model structure} is a type-theoretic model category, hence $(\GGpd)\f$ is a type-theoretic fibration category. \\
We denote by $\1$ the terminal object in the category $\Gpd$ of groupoids, and by a slight abuse of notation $\1$ will also denote the terminal object of $\GGpd$, namely the groupoid $\1$ together with the identity involution. The letter $\bI$ will denote the groupoid with two distinct objects and one isomorphism $\phi\colon 0\rightarrow 1$ between them. Recall that $\Gpd$ has a canonical model structure where the weak equivalences are the equivalences of groupoids. The fibrations are the functors with isomorphism-lifting, namely the functors with the right lifting property with respect to the inclusion $i\colon \1\hookrightarrow\bI$. The cofibrations are the injective-on-objects functors. \\
Given a \emph{combinatorial model category} $\mathscr{M}$ and a small category $\mathcal{I}$, there exists the injective model structure on $[\mathcal{I},\mathscr{M}]$ (see \cite[A.3.3]{lurie:higher-topoi} for details). In this case a morphism $f\in [\mathcal{I},\mathscr{M}]$ is a weak equivalence (\textit{resp.} a cofibration) if $f$ is an objectwise weak equivalence (\textit{resp.} an objectwise cofibration). A morphism is a fibration if it has the right lifting property with respect to every acyclic cofibration (\textit{i.e.} a morphism which is simultaneously a weak equivalence and a cofibration). Since $\Gpd$ together with its canonical model structure is combinatorial, there exists the injective model structure on $\GGpd$. Given a morphism $f$ in $\GGpd$, $\underline{f}$ will denote its image under the forgetful functor that maps an equivariant functor to its underlying functor between groupoids.

\begin{prop}
\label{prop:rightadjoint}
	Let $\mathscr{C}$ be a category together with a distinguished class of morphisms called fibrations and $\mathcal{I}$ a small category. Moreover, assume that $\mathscr{C}$ is locally presentable and for every fibration $g$ the pullback functor along $g$ exists and has a right adjoint. Then for any objectwise fibration $g$ in $[\mathcal{I},\mathscr{C}]$ the pullback functor along $g$ has a right adjoint.
\end{prop}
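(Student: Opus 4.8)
The plan is to produce the right adjoint abstractly, via the adjoint functor theorem for locally presentable categories, rather than by an explicit objectwise formula: the right adjoint in a functor category is \emph{not} computed objectwise, so a hands-on description would be unwieldy. First I would record that $[\mathcal{I},\mathscr{C}]$ is locally presentable, since $\mathscr{C}$ is locally presentable and $\mathcal{I}$ is small, and that consequently the slice categories $[\mathcal{I},\mathscr{C}]/A$ and $[\mathcal{I},\mathscr{C}]/B$ are again locally presentable. Writing $g\colon B\to A$ for our objectwise fibration, the pullback functor $g^*\colon[\mathcal{I},\mathscr{C}]/A\to[\mathcal{I},\mathscr{C}]/B$ is thus a functor between locally presentable categories, and a cocontinuous functor between such categories always has a right adjoint (the adjoint functor theorem for locally presentable categories, i.e.\ the dual special adjoint functor theorem). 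So the entire problem reduces to showing that $g^*$ preserves all small colimits.

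To establish cocontinuity I would exploit that \emph{everything in sight is computed objectwise}. For each $i\in\mathcal{I}$ the evaluation functor $\mathrm{ev}_i\colon[\mathcal{I},\mathscr{C}]\to\mathscr{C}$ preserves both limits and colimits, since these are formed objectwise in a functor category; it induces an evaluation functor on slices $[\mathcal{I},\mathscr{C}]/A\to\mathscr{C}/A_i$, and this too preserves colimits, because the forgetful functors out of the slices create colimits and $\mathrm{ev}_i$ commutes with them. As pullbacks are limits they are likewise formed objectwise, which both ensures that $g^*$ is defined in the first place and yields the key identity $\mathrm{ev}_i\circ g^*=g_i^*\circ\mathrm{ev}_i$, where $g_i$ is the $i$-th component of $g$. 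Here the hypothesis that $g$ is an \emph{objectwise} fibration enters decisively: each $g_i$ is a fibration in $\mathscr{C}$, so by assumption the pullback $g_i^*$ exists and, crucially, admits a right adjoint; having a right adjoint, each $g_i^*$ preserves all small colimits.

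With these preparations the argument closes quickly. Given a small diagram $(c_j)_j$ in $[\mathcal{I},\mathscr{C}]/A$ with colimit $c$, there is a canonical comparison morphism $\colim_j g^*c_j\to g^*(\colim_j c_j)$ in $[\mathcal{I},\mathscr{C}]/B$. Applying $\mathrm{ev}_i$ and using, in turn, that $\mathrm{ev}_i$ preserves colimits, the identity $\mathrm{ev}_i\circ g^*=g_i^*\circ\mathrm{ev}_i$, and that $g_i^*$ preserves colimits, shows that $\mathrm{ev}_i$ sends this comparison morphism to an isomorphism for every $i$. Since the family $\{\mathrm{ev}_i\}_{i\in\mathcal{I}}$ is jointly conservative — a morphism in $[\mathcal{I},\mathscr{C}]$, and hence in any of its slices, is invertible exactly when it is invertible objectwise — the comparison morphism is itself an isomorphism. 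Thus $g^*$ preserves all small colimits, and the adjoint functor theorem supplies the sought-after right adjoint.

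I expect the only real subtlety to lie in the bookkeeping of the second step, namely in verifying that the slice-level evaluation functors preserve colimits and commute with pullback, so that the objectwise colimit-preservation of the $g_i^*$ genuinely lifts to colimit-preservation of $g^*$. The conceptual content is simply that \emph{pullback along a fibration preserves colimits precisely because it has a right adjoint}, and that this property is inherited objectwise; everything else — local presentability of functor categories and their slices, and the existence of right adjoints for cocontinuous functors between locally presentable categories — is standard.
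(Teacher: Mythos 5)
Your proposal is correct and follows essentially the same route as the paper's proof: local presentability of the slices of $[\mathcal{I},\mathscr{C}]$, the adjoint functor theorem reducing existence of the right adjoint to cocontinuity of $g^*$, and then objectwise verification using that colimits and pullbacks in functor categories are pointwise and that each component $g_i^*$ preserves colimits because it has a right adjoint. The only (cosmetic) difference is in the bookkeeping: you track the canonical comparison morphism and invoke joint conservativity of the evaluation functors, whereas the paper passes through the domain functor on slices and chains the corresponding isomorphisms directly.
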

\begin{proof}
\label{proof:prop:rightadjoint}
	Let $g\colon A\rightarrow B$ be an objectwise fibration in $[\mathcal{I},\mathscr{C}]$. Since $\mathscr{C}$ is locally presentable, so are the slices $[\mathcal{I},\mathscr{C}]/B$ and $[\mathcal{I},\mathscr{C}]/A$. Hence, $g^*\colon [\mathcal{I},\mathscr{C}]/B\rightarrow[\mathcal{I},\mathscr{C}]/A$ has a right adjoint if and only if it preserves all small colimits. So, let $\mathcal{D}$ be any small category and $F\colon \mathcal{D}\rightarrow [\mathcal{I},\mathscr{C}]/B$ any  functor such that $\colim F$ exists. One has to provide an isomorphism of the form
	$$g^*(\colim F) \cong \colim(g^*\circ F)$$. Knowing the nature of colimits in a slice category, it is enough to check that $\text{dom}(g^*(\colim F))$ is isomorphic to $\colim (\text{dom} \circ g^* \circ F)$. Since colimits in a functor category are pointwise, it is enough to check that $\text{dom}(g^*(\colim F))(x)$ is isomorphic to $\underset{d}{\colim}\,[\text{dom}(g^*(F(d)))(x)]$ for every $x\in\mathcal{I}$. But pullbacks are pointwise in a functor category, so we have an isomorphism between 
	$\text{dom}(g^*(\colim F))(x)$ and $\text{dom}((g_x)^*\,(\colim F)_x)$. Since $g_x$ is a fibration, by assumption $(g_x)^*$ has a right adjoint and so it preserves all small colimits. Moreover, one has an isomorphism of the form $(\colim F)_x\cong 
	\underset{d}{\colim} \,F(d)_x$. As a consequence, one has the following sequences of isomorphisms
	\begin{align*}
	(g_x)^*((\colim F)_x) & \cong (g_x)^*(\underset{d}{\colim}\,F(d)_x) \\
	 & \cong\underset{d}{\colim}\,[(g_x)^* (F(d)_x)] \\
	\end{align*}
	, and finally 
	\begin{align*}
	\text{dom}((g_x)^*((\colim F)_x)) & \cong \text{dom}(\underset{d}{\colim}\,[(g_x)^* (F(d)_x)]) \\
	& \cong \underset{d}{\colim}\,[\text{dom}((g_x)^* (F(d)_x))] \\
	& \cong \underset{d}{\colim}\,[\text{dom}(g^*\,F(d)) (x)]
	\end{align*}.
\end{proof}

\begin{prop}
\label{prop:ttmc}
	Let $\mathscr{M}$ be a type-theoretic model category whose underlying model category is combinatorial and $\mathcal{I}$ a small category. The category $[\mathcal{I},\mathscr{M}]$ together with the injective model structure is a type-theoretic model category.
\end{prop}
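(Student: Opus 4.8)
The plan is to verify the two defining properties \ref{item:ttmc1} and \ref{item:ttmc2} of Definition \ref{def:ttmc} for the pair $([\mathcal{I},\mathscr{M}],\text{injective})$, the injective model structure being available precisely because $\mathscr{M}$ is combinatorial, as recalled above. The entire argument rests on reducing both properties to the objectwise situation in $\mathscr{M}$, where we are allowed to use that $\mathscr{M}$ is itself a type-theoretic model category. The one nonformal ingredient needed to make this reduction legitimate is a comparison lemma: \emph{every injective fibration is an objectwise fibration}. Note the converse is false, but only the stated direction is required.

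To prove the lemma I would, for each $x\in\mathcal{I}$, use the left adjoint $F_x$ to the evaluation functor $\mathrm{ev}_x\colon[\mathcal{I},\mathscr{M}]\to\mathscr{M}$, namely the copower $F_x(C)(y)=\coprod_{\mathcal{I}(x,y)}C$. If $j$ is an acyclic cofibration in $\mathscr{M}$, then for each $y$ the map $F_x(j)(y)$ is a coproduct of copies of $j$; since acyclic cofibrations form the left class of a weak factorization system they are closed under coproducts, so $F_x(j)$ is an objectwise acyclic cofibration, i.e.\ an injective acyclic cofibration. Hence an injective fibration $g$ has the right lifting property against every $F_x(j)$, and transposing across the adjunction $F_x\dashv\mathrm{ev}_x$ shows $g_x=\mathrm{ev}_x(g)$ has the right lifting property against every acyclic cofibration $j$ of $\mathscr{M}$. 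Thus $g_x$ is a fibration for every $x$, as claimed.

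With the lemma in hand, property \ref{item:ttmc2} is immediate from Proposition \ref{prop:rightadjoint}: the underlying category of $\mathscr{M}$ is locally presentable because $\mathscr{M}$ is combinatorial, every fibration in $\mathscr{M}$ admits a pullback functor with a right adjoint by property \ref{item:ttmc2} for $\mathscr{M}$, and any injective fibration is an objectwise fibration by the lemma; so its pullback functor in $[\mathcal{I},\mathscr{M}]$ has a right adjoint. For property \ref{item:ttmc1}, let $g$ be an injective fibration and $j$ an acyclic cofibration in the slice over the codomain of $g$. Pullbacks in a functor category are computed objectwise, so $(g^*j)_x\cong(g_x)^*(j_x)$ for each $x$; since $g_x$ is a fibration by the lemma and $j_x$ is an acyclic cofibration in $\mathscr{M}$, property \ref{item:ttmc1} for the type-theoretic model category $\mathscr{M}$ makes $(g_x)^*(j_x)$ an acyclic cofibration. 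As cofibrations and weak equivalences in the injective structure are detected objectwise, $g^*j$ is an injective acyclic cofibration.

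The main obstacle is exactly the comparison lemma. Injective fibrations are defined by an abstract lifting property against all objectwise acyclic cofibrations rather than objectwise, so pinning them down as a subclass of objectwise fibrations is the only step that is not a formality, and it is precisely what unlocks the pointwise arguments used for both \ref{item:ttmc1} and \ref{item:ttmc2}; the rest is bookkeeping about pointwise (co)limits in functor and slice categories.
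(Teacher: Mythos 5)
Your proof is correct and takes essentially the same route as the paper's: both reduce properties \ref{item:ttmc1} and \ref{item:ttmc2} to the objectwise situation using the fact that injective fibrations are objectwise fibrations, then invoke Proposition \ref{prop:rightadjoint} (with local presentability coming from combinatoriality) for the right adjoint, and objectwise computation of pullbacks together with property \ref{item:ttmc1} for $\mathscr{M}$ for preservation of acyclic cofibrations. The only difference is that you prove the comparison lemma explicitly via the adjunction $F_x\dashv\mathrm{ev}_x$, whereas the paper simply asserts this standard fact.
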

\begin{proof}
\label{proof:prop:ttmc}
	Since a combinatorial model category is locally presentable as a category and fibrations with respect to the injective model structure are in particular objectwise fibrations, by \ref{prop:rightadjoint} the pullback functor along a fibration has a right adjoint. Moreover, since pullbacks are pointwise in a functor category and acyclic cofibrations are objectwise with respect to the injective model structure, we conclude by \ref{item:ttmc1} for $\mathscr{M}$.	
\end{proof}

Since $\Gpd$ together with its canonical model structure is a type-theoretic model category \cite[Examples 2.16]{shulman:invdia} and is combinatorial as a model category, by \ref{prop:ttmc} we conclude that $\GGpd$ together with the injective model structure is a type-theoretic model category. So, by \ref{prop:ttmc-ttfc} $(\GGpd)\f$ is a type-theoretic fibration category. If we wish to take this model of type theory further, we need a better control on the fibrations of the injective model structure. This is the topic of the next section. 

\section{The injective model structure on $\GGpd$ made explicit}
\label{sec:injstruc}

\begin{notn}
\label{notn:3.1}
	We denote by $\cI$ the groupoid $\bI$ together with the involution that maps $\phi$ to $\phi^{-1}$. Also, we will denote by $\tr$ the groupoid that extends $\cI$ and its involution by a third point, fixed under the $\Z2$-action, denoted $2$ and a second non-identity isomorphism $\psi\colon 1\rightarrow 2$ whose image by the involution is $\psi\circ\phi$ (there are only two non-identity isomorphisms in $\tr$ and their composition). The morphism $i'\colon\cI\hookrightarrow \tr$ is the corresponding inclusion.
\end{notn}

\begin{rmk}
\label{rmk:}
	Note that $i'$ is an objectwise acyclic cofibration, hence it is an acyclic cofibration with respect to the injective model structure. Moreover, note that the morphism $\cI\rightarrow\1$ is an objectwise fibration. However, consider the following lifting problem, $$\xymatrix{\cI \ar@{=}[r] \ar@{^{(}->}[d] & \cI \ar[d] \\ 
		\tr \ar[r] & \1}$$.
	A diagonal filler cannot exist, since the fixed point $2$ of $\tr$ should be mapped to a fixed point, but such a fixed point does not exist in $\cI$. So, there exist objectwise fibrations that are not fibrations with respect to the injective model structure. 	
\end{rmk}

\begin{notn}
\label{notn:3.3}
	Remember that $\lbrace i\rbrace$, where $i$ is the inclusion $\1\hookrightarrow \bI$, is a set of generating acyclic cofibration with respect to the canonical model structure on $\Gpd$. Moreover, the forgetful functor from $\GGpd$ to $\Gpd$ has a left adjoint $S$ that maps a groupoid $A$ to $A\textstyle\coprod A$ together with the involution that swaps the two copies of $A$.
\end{notn}

\begin{prop}
\label{prop:genacycliccof}
	Let $f$ be a morphism in $\GGpd$, the following are equivalent :
	\begin{enumerate}[label=(\roman*)]
		\item $f$ is an acyclic cofibration with respect to the injective model structure.
		\item $f$ is a transfinite composition of pushouts of elements of the set $\lbrace S(i),i' \rbrace$.
	\end{enumerate}
\end{prop}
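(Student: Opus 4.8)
The plan is to prove the two implications separately: (ii)$\Rightarrow$(i) by abstract closure properties, and the substantive direction (i)$\Rightarrow$(ii) by an explicit cell-by-cell reconstruction of the codomain. For (ii)$\Rightarrow$(i) I would first record that an injective acyclic cofibration is precisely an objectwise cofibration that is an objectwise weak equivalence, since both cofibrations and weak equivalences of the injective structure are detected objectwise. The underlying functor of $S(i)$ is $\underline{i}\sqcup\underline{i}\colon\1\sqcup\1\to\bI\sqcup\bI$, which is injective on objects and an equivalence of groupoids, hence an objectwise acyclic cofibration; and $i'$ is an objectwise acyclic cofibration by the preceding Remark. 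Since the acyclic cofibrations form the left class of the weak factorization system cut out by the fibrations (equivalently, the maps with the left lifting property against all fibrations), they are closed under pushout and transfinite composition, so every transfinite composite of pushouts of $\{S(i),i'\}$ is an acyclic cofibration.

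For (i)$\Rightarrow$(ii), let $f\colon A\to B$ be an injective acyclic cofibration. Then $\underline{f}$ is injective on objects and an equivalence of groupoids, hence fully faithful and essentially surjective; in particular every object of $B$ is isomorphic in $\underline{B}$ to an object in the image of $A$. Because $\G$ has a single object, the forgetful functor $U$ is evaluation at that object and therefore creates all colimits, so pushouts and transfinite composites in $\GGpd$ may be computed on underlying groupoids with the induced involution. This reduces the whole construction to an explicit manipulation of groupoids equipped with an involution, which is what makes the honest statement (a genuine cell complex, with no retracts) attainable.

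The construction itself partitions the objects of $B$ not lying in $A$ into $\Z2$-orbits: \emph{free} orbits $\{b,\be b\}$ with $b\neq\be b$, and \emph{fixed} objects $b=\be b$. I would well-order these orbits and build a transfinite filtration starting at $A$, adjoining one orbit at each successor stage and taking the colimit at limit stages. A free orbit is anchored at an object $a\in A$ with $\Hom_{\underline{B}}(a,b)\neq\emptyset$ (available by essential surjectivity) and attached as a pushout of $S(i)$ along the equivariant map $S(\1)\to B_\lambda$ picking out $a$ and $\al a$; this adjoins the pair $\{b,\be b\}$ with chosen isomorphisms to $a$ and $\al a$, swapped by the involution. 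A fixed object $b$ is attached as a pushout of $i'\colon\cI\hookrightarrow\tr$: the combinatorics of $\tr$ — the swapped pair $0,1$, the fixed vertex $2$, and the relation identifying the involute of $\psi$ with $\psi\circ\ph$ — is exactly the data needed to adjoin a single fixed object equivariantly. Full faithfulness of $\underline{f}$ then guarantees that once objects and anchoring isomorphisms are present, the pushout reproduces precisely the hom-sets of $B$, so the colimit of the filtration is $B$ and $f$ is the resulting transfinite composite.

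The hard part will be the fixed-object case. To attach $i'$ one must produce an equivariant map $\cI\to B_\lambda$, that is, an isomorphism $\ph'\colon a\to\al a$ between a $\Z2$-swapped pair satisfying $\be\ph'=(\ph')^{-1}$, and then verify that the pushout of $i'$ adjoins $b$ with exactly the right morphisms and with $\be b=b$. The key point I would exploit is that, although $\ph'$ is naturally built from an isomorphism $u\colon a\to b$ to the not-yet-added object $b$, it is a morphism between objects of $A$ and hence, by full faithfulness, already lives in $A\subseteq B_\lambda$; the required equivariance relation then falls out of a short manipulation using $\al^2=\id$. The free-orbit case (where the anchor is only a pair of objects, $S(\1)$ carrying no nonidentity morphisms) and the hom-set bookkeeping via full faithfulness are comparatively routine, and the limit stages are handled by the fact, established in the first paragraph, that acyclic cofibrations are stable under transfinite union.
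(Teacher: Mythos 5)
Your proposal is correct and takes essentially the same route as the paper's own proof: the same reduction to a full subgroupoid inclusion, the same well-ordered orbit-by-orbit transfinite attachment ($S(i)$ for free orbits, $i'$ for fixed objects), and the same key device for the fixed-point case, namely that the anchoring isomorphism $a\to\al(a)$ (the paper's $\be(\varphi)^{-1}\circ\varphi$) already lies in $A$ by fullness and is equivariant because $\be\circ\be=\id$. The only cosmetic difference is that the paper does not isolate the creation of colimits by the forgetful functor as an explicit step, and your remark that limit stages are handled by stability of acyclic cofibrations under transfinite composition is unnecessary there (limit stages only require identifying the colimit with the full subgroupoid on the union of objects), but neither point affects correctness.
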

\begin{proof}
\label{proof:prop:genacycliccof}
	The implication $(ii)\Rightarrow(i)$ is clear, since $S(i)$ and $i'$ are objectwise acyclic cofibrations, so they are acyclic cofibrations for the injective model structure. Moreover, the class of acyclic cofibrations is closed under pushouts and transfinite compositions. \\
	Conversely, let $f\colon A\rightarrow B$ be an acyclic cofibration. Since $f$ is an objectwise acyclic cofibration, $\underline{f}$ is (isomorphic to) the inclusion of a full subgroupoid of $B$ which is equivalent to $B$. Let $((\text{Ob}B\setminus\text{Ob}A)/\Z2, \leqslant)$ be the set of orbits of $\text{Ob}B\setminus\text{Ob}A$ under the $\Z2$-action together with a well-ordering, let $\lambda$ be the order type of this well-ordered set, and let $g\colon (\text{Ob}B\setminus\text{Ob}A)/\Z2\rightarrow \lambda$ be an order-preserving bijection. By transfinite recursion we define a $\lambda$-sequence $X$, where we add the elements of $\text{Ob}B\setminus\text{Ob}A$ to $A$ by following our well-ordering. Take $X_0\coloneqq A$. For $\gm$ such that $\gm + 1 <\lambda$, let $s$ be the element of $(\text{Ob}B\setminus\text{Ob}A)/\Z2$ that corresponds to $\gm + 1$ under the bijection $g$. We built $X_{\gm + 1}$ as follows. If $s$ is a singleton with unique element $x$, \textit{i.e.} $x$ is fixed under the $\Z2$-action, then $f$ being essentially surjective there exists an isomorphism $\varphi\colon y\rightarrow x$ with $y\in A$. In this case $X_{\gm + 1}$ is the following pushout
	$$\xymatrix{\cI \ar[r]\ar@{^{(}->}[d]_{i'} & X_{\gm} \ar[d] \\
		\tr \ar[r] & X_{\gm + 1}\pushoutcorner}$$,
	where the upper horizontal morphism maps $\phi$ to $\be(\varphi)^{-1}\circ \varphi$. Otherwise, the orbit $s$ is $\lbrace x, \be(x)\rbrace$, and we define $X_{\gm + 1}$ as the following pushout
	$$\xymatrix{S(\1) \ar[r]\ar@{^{(}->}[d]_{S(i)} & X_{\gm} \ar[d] \\
		S(\bI)\ar[r] & X_{\gm + 1} \pushoutcorner}$$, where the upper horizontal arrow maps $0$ to $y$ (and $1$ to $\al(y)$). Last, if $\gm$ is a limit ordinal, then $X_{\gm}$ is $\underset{\delta<\gm}{\colim}\, X_{\delta}$. For every $\gm<\lambda$, $X_{\gm}$ is a full subgroupoid of $B$ stable under the involution $\be$ on $B$, and $f$ is the transfinite composition of the $\lambda$-sequence $X$.
\end{proof}

\begin{prop}
\label{prop:injfib}
	Let $f$ be a morphism in $\GGpd$, the following are equivalent :
	\begin{enumerate}[label=(\roman*)]
		\item $f$ is a fibration with respect to the injective model structure on $\GGpd$.
		\item $f$ has the right lifting property with respect to the elements of the set $\lbrace S(i), i'\rbrace$.
	\end{enumerate}
\end{prop}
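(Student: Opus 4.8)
The plan is to reduce both implications to Proposition~\ref{prop:genacycliccof}, which characterizes the acyclic cofibrations as the transfinite compositions of pushouts of elements of $\lbrace S(i), i' \rbrace$, together with the standard closure properties of lifting classes.

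The implication $(i)\Rightarrow(ii)$ is immediate. By definition of the injective model structure, a fibration $f$ has the right lifting property with respect to every acyclic cofibration; since $S(i)$ and $i'$ are objectwise acyclic cofibrations (as already observed in the proof of Proposition~\ref{prop:genacycliccof}), they are in particular acyclic cofibrations for the injective model structure, so $f$ lifts against both.

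For the converse $(ii)\Rightarrow(i)$, suppose $f$ has the right lifting property with respect to $S(i)$ and $i'$. Let $L$ denote the class of morphisms of $\GGpd$ having the \emph{left} lifting property with respect to $f$. I would show that $L$ is saturated, i.e.\ closed under pushouts and transfinite composition, and note that by hypothesis $S(i), i'\in L$. It then follows that every transfinite composition of pushouts of elements of $\lbrace S(i), i'\rbrace$ lies in $L$; but by Proposition~\ref{prop:genacycliccof} these are exactly the acyclic cofibrations. Hence $f$ has the right lifting property with respect to every acyclic cofibration, which is precisely the definition of a fibration for the injective model structure.

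The only real work lies in verifying the closure of $L$. For pushouts: given a square exhibiting $g'$ as a pushout of some $g\in L$ and an arbitrary lifting problem of $g'$ against $f$, one precomposes the square with the original leg to obtain a lifting problem of $g$ against $f$, solves it using $g\in L$, and then produces the desired diagonal filler for $g'$ by the universal property of the pushout, checking that the two resulting triangles commute. For transfinite composition: given a $\lambda$-sequence whose successive maps are pushout-legs lying in $L$, one constructs the required lift by transfinite recursion, solving the successor-stage lifting problems using membership in $L$ and passing to the colimit at limit stages. Both arguments are formal consequences of the universal properties involved and are entirely standard; this is the main (though routine) obstacle, the rest of the proof being bookkeeping that is handled once and for all by Proposition~\ref{prop:genacycliccof}.
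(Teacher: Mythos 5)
Your proof is correct and takes essentially the same route as the paper: both directions reduce to \ref{prop:genacycliccof}, with the converse resting on the fact that a map having the right lifting property with respect to a set $J$ also lifts against all relative $J$-cell complexes. The only difference is that the paper simply cites this closure fact from \cite[Prop.10.5.10]{hirschhorn:modelcats}, whereas you sketch its (standard) proof of saturation of the left lifting class.
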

\begin{proof}
\label{proof:prop:injfib}
Straightforward with \ref{prop:genacycliccof} and the fact that if a class of maps has the right lifting property with respect to a set $J$ of maps, then it has the right lifting property with respect to the relative J-cell complexes (\textit{i.e} the transfinite compositions of pushouts of elements of $J$) \cite[Prop.10.5.10]{hirschhorn:modelcats}.
\end{proof}

\section{A univalent universe in the type-theoretic fibration category $(\GGpd)\f$}
\label{sec:univalentuniv}

We recall the notion of a \emph{universe} \cite[Definition 6.12]{shulman:invdia} in a type-theoretic fibration category.

\begin{defn}
	\label{def:universe}
	A fibration $p: \widetilde{U}\fib U$ in a type-theoretic fibration category $\mathscr{C}$ is a \textbf{universe} if the following hold.
	\begin{enumerate}
		\item Pullbacks of $p$ are closed under composition and contain the identities.\label{item:u1}
		\item If $f: B\fib A$ and $g: A\fib C$ are pullbacks of $p$, so is $\Pi_g f \fib C$.\label{item:u2}
		\item If $A\fib C$ and $B\fib C$ are pullbacks of $p$, then any morphism $f: A\to B$ over $C$ factors as an acyclic cofibration followed by a pullback of $p$.\label{item:u3}
	\end{enumerate}
\end{defn}

\begin{defn}
	\label{defn:smallfib}
	Given a universe $p\colon\widetilde{U}\rightarrow U$ in a type-theoretic fibration category, a \textbf{small fibration}, or a $U$-small fibration, is a pullback of $p$. 	
\end{defn}

\begin{rmk}
	\label{rmk:modelttwithuniv}
	A universe in a type-theoretic fibration category interprets a universe type in type theory.	
\end{rmk}

Also, we recall what it means for a universe in a type-theoretic fibration category to be \emph{univalent}\label{univalenceproperty} (see also \cite[section 7]{shulman:invdia}). Let $\mathsf{Type}$ be a universe in the type theory under consideration. Given two \emph{small} types, \textit{i.e.} two elements of $\mathsf{Type}$, there is the type of weak equivalences between them. In a type-theoretic fibration category with a universe, this dependent type is represented by a fibration $E \fib U\times U$. Moreover, there is a natural map $U \rightarrow E$ that sends a type to its identity equivalence. By \ref{item:cat5} one can factor the diagonal map $\de\colon U \rightarrow U\times U$ as an acyclic cofibration followed by a fibration in the following commutative diagram,
$$\xymatrix{U \ar[r]\ar@{ >->}[d]^{\rotatebox[origin=c]{90}{$\sim$}} & E \ar@{->>}[d] \\ PU \ar@{->>}[r] \ar@{-->}[ru] & U\times U}$$.
The universe $p\colon \widetilde{U} \fib U$ is univalent if the map $U \rightarrow E$ is a right homotopy equivalence, or equivalently (by the \emph{2-out-of-3} property and the fact that $U$ is fibrant like any object of a type-theoretic fibration category) if the dashed map is a right homotopy equivalence.

Given $\kappa$ an inaccessible cardinal, we recall \cite{bordg:inadequacy} below the construction of a \emph{non-univalent universe} in the type-theoretic fibration category $\GGpd$ together with the so-called \emph{projective model structure}.
\begin{itemize}
	\item The objects of the groupoid $\widetilde{U}$ are dependent tuples of the form $(A, B, \varphi, a)$, where $A, B$ are $\kappa$-small discrete groupoids, $\varphi\colon A \rightarrow B$ is an isomorphism in $\Gpd$, and $a$ is an object of $A$.
	\item The morphisms in $\widetilde{U}$ between $(A, B, \varphi, a)$ and $(C, D, \psi, c)$ are pairs of the form $(\rho\colon A\rightarrow C, \tau\colon B\rightarrow D)$ such that $\psi\circ \rho = \tau\circ \varphi$ and $\rho (a) = c$. 
\end{itemize}
The composition in $\widetilde{U}$ is given by 
$$(\rho',\tau')\circ(\rho,\tau)\coloneqq(\rho'\circ\rho,\tau'\circ\tau)$$. Note that $\widetilde{U}$ is a groupoid. Indeed, the inverse of the morphism $(\rho,\tau)$ is given by
$$(\rho,\tau)^{-1}\coloneqq(\rho^{-1},\tau^{-1})$$. We equip $\widetilde{U}$ with the involution $\tilde{\upsilon}$ as follows,
$$\fsix{\tilde{\upsilon}\colon\widetilde{U}}{\widetilde{U}}{(A,B,\varphi,a)}{(B,A,\varphi^{-1},\varphi(a))}{(\rho,\tau)}{(\tau,\rho)}$$. One denotes by $U$ the ``unpointed'' version of $\widetilde{U}$, \textit{i.e.} objects are of the form $(A,B,\varphi)$ and morphisms of the form $(\rho,\tau)$, with its corresponding involution $\upsilon$. We define the morphism $p$ in $\GGpd$ as the projection 
$$\fsix{p\colon\widetilde{U}}{U}{(A,B,\varphi,a)}{(A,B,\varphi)}{(\rho,\tau)}{(\rho,\tau)}$$.

First, we will prove that the morphism $p\colon \widetilde{U}\rightarrow U$ is a fibration between fibrant objects \emph{with respect to the injective model structure} on $\GGpd$.

\begin{lem}
\label{lem:4.4}
	The morphism $p\colon\widetilde{U} \rightarrow U$ is a fibration with respect to the injective model structure. 
\end{lem}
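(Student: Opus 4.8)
The plan is to invoke \ref{prop:injfib}: it suffices to show that $p$ has the right lifting property with respect to each of $S(i)$ and $i'$.

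For $S(i)$ I would pass through the adjunction of \ref{notn:3.3} between $S$ and the forgetful functor $\GGpd\to\Gpd$. A lifting problem for $p$ against $S(i)$ transposes, by naturality of the adjunction bijection, to a lifting problem for the underlying functor $\underline{p}\colon\widetilde{U}\to U$ in $\Gpd$ against $i\colon\1\hookrightarrow\bI$, and diagonal fillers correspond under the transposition. Hence it is enough to check that $\underline{p}$ is an isofibration, which is immediate: given an object $(A,B,\varphi,a)$ of $\widetilde{U}$ lying over $(A,B,\varphi)$ and an isomorphism $(\rho,\tau)\colon(A,B,\varphi)\to(C,D,\psi)$ in $U$, the pair $(\rho,\tau)\colon(A,B,\varphi,a)\to(C,D,\psi,\rho(a))$ is an isomorphism of $\widetilde{U}$ lifting it.

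The substantial case is $i'\colon\cI\hookrightarrow\tr$, and here I would begin by unwinding the data of a commuting square. By equivariance, a map $\cI\to\widetilde{U}$ amounts to an object $X_0=(A,B,\varphi,a)$ together with an isomorphism $h\colon X_0\to\tilde{\upsilon}(X_0)$ with $\tilde{\upsilon}(h)=h^{-1}$; writing $h=(\rho_0,\tau_0)$ this forces $\tau_0=\rho_0^{-1}$ and $\rho_0(a)=\varphi(a)$. The bottom map $\tr\to U$ is determined by the image $G(\psi)=(\rho,\tau)\colon(B,A,\varphi^{-1})\to G(2)$ of the edge $\psi\colon 1\to 2$ together with a strict $\upsilon$-fixed object $G(2)$, which must therefore have the form $(C,C,\eta)$ with $\eta=\eta^{-1}$. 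Commutativity of the square gives $\underline{p}\,h=G(\phi)=(\rho_0,\tau_0)$, and equivariance of the bottom map (using that the involution of $\tr$ sends $\psi$ to $\psi\circ\phi$) yields $\rho\circ\rho_0=\tau$ and $\tau\circ\tau_0=\rho$.

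A candidate filler $\widetilde{G}\colon\tr\to\widetilde{U}$ is then forced: it must send $0,1$ to $X_0,\tilde{\upsilon}(X_0)$, send $\phi$ to $h$, and send $\psi$ to the lift $(\rho,\tau)\colon\tilde{\upsilon}(X_0)\to(C,C,\eta,c)$, whose existence pins down the point as $c=\rho(\varphi(a))$. The one thing that can go wrong is \emph{exactly} the obstruction exhibited in \ref{rmk:}: the fixed object $2$ of $\tr$ must be sent to a strictly $\tilde{\upsilon}$-fixed object of $\widetilde{U}$, so we need $\eta(c)=c$. This is the crux of the whole lemma, and it is where the pointing rescues us. Using $\eta\circ\rho=\tau\circ\varphi^{-1}$ one computes $\eta(c)=\eta(\rho(\varphi(a)))=\tau(a)$, while $\rho\circ\rho_0=\tau$ together with $\rho_0(a)=\varphi(a)$ gives $\tau(a)=\rho(\varphi(a))=c$; hence $\eta(c)=c$, so $\widetilde{G}(2)=(C,C,\eta,c)$ is a legitimate fixed object. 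Everything remaining is bookkeeping: since $\tr$ is freely generated as a groupoid by $\phi$ and $\psi$, no composite needs a separate check, $p\widetilde{G}=G$ and $\widetilde{G}\circ i'=F$ hold by construction, and equivariance of $\widetilde{G}$ on objects and on $\phi,\psi$ reduces to the already-established identities $\tilde{\upsilon}(h)=h^{-1}$, $\eta(c)=c$, and $(\rho\rho_0,\tau\tau_0)=(\tau,\rho)$ (the last read off in $U$ and lifted using that a morphism of $\widetilde{U}$ is determined by its underlying pair). I expect the fixed-point verification $\eta(c)=c$ to be the only genuine obstacle; the two lifting criteria and the equivariance checks are otherwise routine.
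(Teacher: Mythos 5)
Your proof is correct and takes essentially the same route as the paper: reduce via \ref{prop:injfib} to the right lifting property against $S(i)$ and $i'$, and for $i'$ build the same forced filler (your $h=(\rho_0,\tau_0)$, $G(\psi)=(\rho,\tau)$, $c=\rho(\varphi(a))$ are the paper's $f(\phi)=(\rho,\tau)$, $g(\psi)=(\sigma,\chi)$, $\chi(a)$, and your identities $\tau_0=\rho_0^{-1}$, $\rho\rho_0=\tau$ are exactly the paper's $\tau=\rho^{-1}$, $\sigma\rho=\chi$). The only differences are to your credit: you handle the $S(i)$ case self-containedly via the adjunction $S\dashv(\text{forgetful})$ and a direct isomorphism-lifting check, where the paper cites its companion paper's projective-structure results, and you verify explicitly the fixed-point condition $\eta(c)=c$ needed for $j(2)$ to be a legitimate $\tilde{\upsilon}$-fixed object, a point the paper's proof leaves implicit.
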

\begin{proof}
\label{proof:lem:4.4}
	Thanks to \ref{prop:injfib} it suffices to prove that $p$ has the right lifting property with respect to $S(i)$ and $i'$. We already know that $p$ has the right lifting property against $S(i)$. Indeed, $p$ is a fibration with respect to the projective model structure, \textit{i.e.} an objectwise fibration, and $S(i)$ is a generating acyclic cofibration with respect to that model structure (see \cite{bordg:inadequacy} for details, in particular the beginning of section 2 and lemma 4.5).\\
	Assume we have a lifting problem as follows
	$$\xymatrix{\cI \ar@{^{(}->}[d]_{i'}\ar[r]^f & \widetilde{U} \ar[d]^{p} \\
		\tr \ar[r]_{g} & U}$$.
	Let $f(\phi)$ be the pair $(\rho,\tau)\colon (A,B,\varphi,a)\rightarrow (B,A,\varphi^{-1},\varphi(a))$.
	Since $f$ is equivariant, we have
	$f(\phi^{-1}) = \widetilde{\upsilon}(f(\phi))$, and one concludes that $\tau = \rho^{-1}$ and $\tau(\varphi(a)) = a$. By commutativity of the diagram one has $p(f(\phi)) = g(\phi)$, hence $\text{dom}(g(\psi)) = (B,A,\varphi^{-1})$.
	Let $g(\psi)\colon (B,A,\varphi^{-1})\rightarrow (C,D,\eta)$ be the pair $(\sigma,\chi)$.
	Since $g$ is equivariant, $g(\psi\circ\phi)$ is equal to $\upsilon(g(\psi))$.
	So, one has the equality $\sigma\circ\rho = \chi$. Moreover, note that $g(2)$ is a fixed point of $U$, hence $D = C$ and $\eta$ is an involution. Now, we define a diagonal filler $j$ as follows. Take $j(\phi)= f(\phi)$, $j(2) = (C,C,\eta,\chi(a))$, and $j(\psi) = (\sigma,\chi)$ seen as a morphism in $\widetilde{U}$ from $(B,A,\varphi^{-1},\varphi(a))$ to $(C,C,\eta,\chi(a))$ (indeed, $\varphi(a) = \rho(a)$, hence $\sigma(\varphi(a)) = \sigma(\rho(a)) = \chi(a)$).
\end{proof}

\begin{lem}
\label{lem:4.5}
	The groupoids $\widetilde{U}$ and $U$ together with their involutions $\widetilde{\upsilon}$ and $\upsilon$ are fibrant objects of $\GGpd$ with respect to the injective model structure.
\end{lem}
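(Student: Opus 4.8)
The plan is to invoke \ref{prop:injfib}: an object $X$ of $\GGpd$ is fibrant precisely when $X\to\1$ is a fibration for the injective model structure, which by that proposition holds iff $X\to\1$ has the right lifting property against both generating acyclic cofibrations $S(i)$ and $i'$. Since the codomain is the terminal object, in each case a diagonal filler is nothing but an extension of the given top map along the generating cofibration, so I must show that every equivariant functor out of $S(\1)$, resp.\ $\cI$, extends along $S(i)$, resp.\ $i'\colon\cI\hookrightarrow\tr$.

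The lifting against $S(i)$ is routine and I would dispatch it by adjunction. As the forgetful functor $\underline{(\blank)}\colon\GGpd\to\Gpd$ is right adjoint to $S$ and preserves the terminal object, a lifting problem for $\widetilde{U}\to\1$ against $S(i)$ transposes to one for $\underline{\widetilde{U}}\to\1$ against $i\colon\1\hookrightarrow\bI$ in $\Gpd$; a filler exists because every groupoid is fibrant in the canonical model structure (any functor to $\1$ trivially lifts isomorphisms). The identical argument applies to $U$.

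The content of the lemma is the lifting against $i'\colon\cI\hookrightarrow\tr$, i.e.\ extending an arbitrary equivariant $f\colon\cI\to\widetilde{U}$ over the added point $2$ and morphism $\psi$. First I would unwind $f$: writing $f(\phi)=(\rho,\tau)$, equivariance against the involution $\phi\mapsto\phi^{-1}$ of $\cI$ forces $f(1)=\widetilde{\upsilon}(f(0))$, so with $f(0)=(A,B,\varphi,a)$ one gets $f(1)=(B,A,\varphi^{-1},\varphi(a))$, $\tau=\rho^{-1}$ and $\rho(a)=\varphi(a)$, while the condition that $(\rho,\tau)$ be a morphism of $\widetilde{U}$ reads $\varphi^{-1}\circ\rho=\rho^{-1}\circ\varphi$. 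Because $\tr$ is the contractible groupoid on three objects, the extension amounts to a free choice of an object $f(2)$ and an isomorphism $f(\psi)\colon f(1)\to f(2)$, subject only to equivariance. The crux is that $2$ is fixed by the involution of $\tr$, so $f(2)$ must be a \emph{strict fixed point} of $\widetilde{\upsilon}$; this is exactly the obstruction that made the objectwise fibration $\cI\to\1$ fail to be an injective fibration in the Remark following \ref{notn:3.1}, where no such fixed point existed. Producing one here is the main difficulty.

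The key move is to take
$$f(2)\coloneqq(B,B,\rho\circ\varphi^{-1},\varphi(a)),\qquad f(\psi)\coloneqq(\mathrm{id}_B,\rho).$$
Then three short verifications finish the proof. Setting $\eta\coloneqq\rho\circ\varphi^{-1}$, the identity $\varphi^{-1}\circ\rho=\rho^{-1}\circ\varphi$ gives $\eta\circ\eta=\mathrm{id}_B$, so $\eta$ is an involution of $B$; moreover $\eta(\varphi(a))=\rho(a)=\varphi(a)$, so $f(2)$ is genuinely $\widetilde{\upsilon}$-fixed and $f(\psi)$ is a well-formed morphism of $\widetilde{U}$ with the stated source and target. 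Finally, since the involution of $\tr$ carries $\psi$ to $\psi\circ\phi$, equivariance on $\psi$ requires $f(\psi)\circ f(\phi)=\widetilde{\upsilon}(f(\psi))$, which with $f(\psi)=(\mathrm{id}_B,\rho)$ reduces to the two identities $\mathrm{id}_B\circ\rho=\rho$ and $\rho\circ\rho^{-1}=\mathrm{id}_B$, both automatic. This yields the filler and hence the fibrancy of $\widetilde{U}$; deleting the basepoint coordinate throughout (taking $f(2)=(B,B,\rho\circ\varphi^{-1})$ and $f(\psi)=(\mathrm{id}_B,\rho)$) gives the same conclusion for $U$. I expect all the difficulty to lie in guessing the involution $\rho\circ\varphi^{-1}$ on $B$; once it is written down the checks are mechanical.
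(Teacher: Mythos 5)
Your proof is correct, and every computation in it checks out: $\eta=\rho\circ\varphi^{-1}$ is an involution of $B$ precisely because of the relation $\varphi^{-1}\circ\rho=\rho^{-1}\circ\varphi$, the basepoint $\varphi(a)$ is fixed by $\eta$ since $\rho(a)=\varphi(a)$, and equivariance on $\psi$ reduces to the two trivial identities you state. Your opening moves coincide with the paper's: reduce to the right lifting property against $S(i)$ and $i'$ via \ref{prop:injfib}, and dispose of $S(i)$ using the fact that every groupoid is fibrant in the canonical model structure (the paper phrases this as ``$U$ is a projective fibrant object and $S(i)$ is a projective acyclic cofibration''; your transposition across the adjunction $S\dashv\underline{(\blank)}$ is the same fact in different clothing). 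Where you genuinely diverge is in the division of labour between $U$ and $\widetilde{U}$: the paper constructs an explicit $i'$-filler only for $U$, taking $j(2)=(A,A,\tau\circ\varphi)$ and $j(\psi)=(\varphi^{-1},\tau\circ\varphi)$, and then obtains the fibrancy of $\widetilde{U}$ for free by composing the injective fibrations $\widetilde{U}\fib U\fib\1$, invoking \ref{lem:4.4}; you instead attack the pointed object $\widetilde{U}$ directly --- the harder of the two cases, since the basepoint must also be fixed --- and recover $U$ by deleting the basepoint coordinate. Your route costs more computation but is self-contained: it makes no use of \ref{lem:4.4}, so it would stand even before one knows that $p$ is an injective fibration, whereas the paper's route is shorter because it reuses prior work. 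Note finally that your fixed point $(B,B,\rho\circ\varphi^{-1})$ and the paper's $(A,A,\tau\circ\varphi)$ are conjugate along $\varphi$, since $\tau\circ\varphi=\varphi^{-1}\circ(\rho\circ\varphi^{-1})\circ\varphi$, so the two fillers are essentially the same datum written at opposite ends of $\varphi$.
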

\begin{proof}
\label{proof:lem:4.5}
	We start with $U$. It suffices by \ref{prop:injfib} to prove that the unique morphism from $U$ to $\1$ has the right lifting property with respect to $S(i)$ and $i'$. 
	First, assume that we have the following lifting problem
	$$\xymatrix{\cI \ar@{^{(}->}[d]_{i'}\ar[r]^f & U \ar[d] \\ 
		\tr \ar[r] & \1}$$.
	Let $f(\phi)$ be the pair $(\rho,\tau)\colon (A,B,\varphi)\rightarrow (B,A,\varphi^{-1})$.
	Since $f$ is equivariant, one concludes $(\rho^{-1},\tau^{-1}) = (\tau,\rho)$. Hence, one has $\tau = \rho^{-1}$. We define a morphism $j\colon \tr\rightarrow U$ by $j(\phi) = f(\phi)$, $j(2) = (A,A,\tau\circ\varphi)$, and $j(\psi) = (\varphi^{-1},\tau\circ \varphi)$. The reader can easily check that $j$ is a diagonal filler.\\
	Second, $U$ is a projective fibrant object and $S(i)$ is a projective acylic cofibration, so $U\rightarrow \1$ has the right lifting property with respect to $S(i)$. \\
	Next, recall from \ref{lem:4.4} that $p$ is a fibration and fibrations are closed under composition. Thus, we deduce the fibrancy of $\widetilde{U}$ from the fibrancy of $U$ in the following commutative diagram
	$$\xymatrix{\widetilde{U} \ar@{-->}[rd] \ar@{->>}[d]_{p} & \\
		U \ar@{->>}[r] & \1}$$.
\end{proof}

\begin{thm}
\label{thm:universe}
	The morphism $p\colon\widetilde{U}\rightarrow U$ is a universe in the type-theoretic fibration structure on $(\GGpd)\f$ given in section 2.
\end{thm}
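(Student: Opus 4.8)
The strategy is to verify the three conditions of \ref{def:universe} one by one, the whole argument hinging on an explicit description of the pullbacks of $p$, i.e.\ of the $U$-small fibrations. The central preliminary step is therefore to prove the characterization that a fibration $f\colon X\fib Y$ in $(\GGpd)\f$ is a pullback of $p$ exactly when each of its fibers is a $\kappa$-small discrete groupoid. One direction is immediate: the fiber of $p$ over $(A,B,\varphi)$ is the set $A$, which is $\kappa$-small and discrete, and pullbacks preserve fibers. For the converse I would build a classifying map $Y\to U$ by sending an object $y$, with $\eta$ the involution of $Y$, to the triple $(X_y,\,X_{\eta(y)},\,\varphi_y)$, where $X_y$ is the fiber over $y$ and $\varphi_y\colon X_y\to X_{\eta(y)}$ is the isomorphism induced by the involution on $X$. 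This is precisely the information that the unbiased $(A,B,\varphi)$-encoding of $U$ was designed to carry; one then checks that the assignment is equivariant, sending $\eta(y)$ to $\upsilon(X_y,X_{\eta(y)},\varphi_y)$, and that pulling $p$ back along it returns $f$.

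With this characterization in hand, conditions \ref{item:u1} and \ref{item:u2} become fiberwise verifications relying only on the inaccessibility of $\kappa$. Identities have singleton fibers, which are $\kappa$-small and discrete, so they are pullbacks of $p$; and the fibers of a composite $g\circ f$ are disjoint unions, indexed by a $\kappa$-small set, of $\kappa$-small discrete sets, hence $\kappa$-small and discrete by regularity of $\kappa$, giving \ref{item:u1}. For \ref{item:u2}, the fiber of $\Pi_g f$ over a point is the set of sections of $f$ over the corresponding fiber of $g$, i.e.\ a $\kappa$-small product of $\kappa$-small discrete sets, which is again $\kappa$-small and discrete; the equivariant classifying data are carried along automatically.

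Condition \ref{item:u3} is the factorization axiom. Given pullbacks of $p$, $A\fib C$ and $B\fib C$, and a morphism $f\colon A\to B$ over $C$, I would first factor $f$ as an acyclic cofibration followed by a fibration $g$ using \ref{item:cat5} of the type-theoretic fibration structure, and then show that $g$ is a pullback of $p$ by the characterization above. Since the acyclic cofibration is a weak equivalence, the fiber of $g$ over a point $b$ lying over $c$ is a model for the homotopy fiber of $f$ over $b$; because $A_c$ and $B_c$ are discrete, this homotopy fiber is simply the preimage $f^{-1}(b)\subseteq A_c$, which is $\kappa$-small and discrete. Hence $g$ is a $U$-small fibration, as required.

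The main obstacle is the characterization lemma itself, and within it the coherence of the classifying map. To conclude that $f$ is literally --- not merely up to isomorphism --- a pullback of $p$, the assignment $y\mapsto (X_y,X_{\eta(y)},\varphi_y)$ must be promoted to a strict equivariant functor $Y\to U$ recovering $f$ on the nose, which demands the usual strictification care for universe constructions, here further complicated by the involution and especially by its fixed points, where $(A,A,\varphi)$ and its image $(A,A,\varphi^{-1})$ under $\upsilon$ must be matched compatibly. Once this is settled, conditions \ref{item:u1}--\ref{item:u3} follow routinely from the inaccessibility of $\kappa$ together with the discreteness of all the homotopy fibers involved.
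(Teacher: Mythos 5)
Your strategy is genuinely different from the paper's. The paper does almost no new work at this point: having shown in \ref{lem:4.4} and \ref{lem:4.5} that $p$ is an injective fibration between injective-fibrant objects, it observes that pullbacks of $p$ and the right adjoints $\Pi_g$ are purely categorical notions, so conditions \ref{item:u1} and \ref{item:u2} transfer verbatim from the projective-structure counterpart in \cite[Thm.4.11]{bordg:inadequacy}, and that the factorization witnessing \ref{item:u3} there is a projective acyclic cofibration followed by a pullback of $p$, hence in particular an injective acyclic cofibration followed by a pullback of $p$. You instead reprove everything from scratch via the characterization of $U$-small fibrations as the fibrations whose \emph{strict} fibers are $\kappa$-small and discrete. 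That characterization is correct: with discrete fibers, isomorphism-lifts are unique, so the classifying map $y\mapsto (X_y,X_{\eta(y)},\varphi_y)$ is strictly functorial and equivariant, and being a pullback of $p$ is invariant under isomorphism over the base, which also resolves the strictness worry you raise at the end (granting the standard set-theoretic care needed for the fibers to be literally objects of $U$). Your fiberwise verifications of \ref{item:u1} and \ref{item:u2} from regularity and inaccessibility of $\kappa$ then go through.

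Your argument for \ref{item:u3}, however, has a genuine gap. You take the factorization of $f\colon A\to B$ supplied by \ref{item:cat5} and claim its fibration part $g$ is a pullback of $p$ because the fibers of $g$ ``model the homotopy fiber'' of $f$. But your characterization lemma is about strict fibers, and strict discreteness and strict $\kappa$-smallness are not invariant under equivalence of groupoids: the strict fiber of $g$ is only \emph{equivalent} to the homotopy fiber, and \ref{item:cat5} gives you no control over it. Concretely, the factorizations of the injective model structure come from the small object argument on $\lbrace S(i),i'\rbrace$, and already for $\mathrm{id}\colon\1\to\1$ one step of that argument produces a middle object that is a contractible but non-discrete groupoid; its map to $\1$ is then \emph{not} a pullback of $p$, since every pullback of $p$ has literally discrete fibers (strict pullbacks preserve strict fibers). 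So the step ``hence $g$ is a $U$-small fibration'' fails. The repair inside your framework is to exhibit a specific factorization rather than invoke \ref{item:cat5}: show that $f$ itself is already a fibration with $\kappa$-small discrete fibers --- every morphism in a fiber of $f$ projects to an identity of $C$, hence lies in a discrete fiber of $A\fib C$, and the lifting properties of $f$ against $S(i)$ and $i'$ follow from those of $A\fib C$ together with discreteness of the fibers of $B\fib C$ --- so that $f=f\circ\mathrm{id}_A$, with $\mathrm{id}_A$ an acyclic cofibration, is the required factorization.
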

\begin{proof}
\label{proof:thm:universe}
	It follows from \ref{lem:4.4} and \ref{lem:4.5} that $p$ is a fibration in $(\GGpd)\f$ with respect to the injective model structure. Since small fibrations, \textit{i.e.} pullbacks of $p$, and right adjointness, when they exist, are categorical notions they are the same in the projective and injective settings, hence conditions \ref{item:u1} and \ref{item:u2} follow from their counterparts in \cite[Thm.4.11]{bordg:inadequacy}. Moreover, since projective acyclic cofibrations are in particular objectwise acyclic cofibrations, \ref{item:u3} follows from its counterpart \textit{ibid.} .   
\end{proof}

In the rest of this article we will prove that $p$ is a univalent universe. The first step consists in constructing specific path objects in $\GGpd$ with respect to the injective model structure. Let $f\colon A\rightarrow C$ be a morphism in $\GGpd$. By the universal property of the pullback, one gets the diagonal morphism $\delta$ as follows
$$\xymatrix{A \ar@{-->}[rd]^{\delta}\ar@/^2pc/[rrrd]^{\text{id}}\ar@/_2pc/[rddd]_{\text{id}} & & & \\ &{A\times_C A}\pullbackcorner \ar[rr]\ar[dd] & & A\ar[dd]^f \\ \\ & A \ar[rr]_f & & C}$$.
We define a groupoid $P_CA$ together with an involution $\pi_CA$ as follows. The objects of $P_CA$ are tuples $(x,y,\varphi)$, where $\varphi\colon x\rightarrow y$ is an isomorphism in $A$ such that $f(\varphi)$ is the identity morphism. A morphism in $P_CA$ between $(x,y,\varphi)$ and $(x',y',\varphi')$ is a pair $(\rho,\tau)$, where $\rho\colon x\rightarrow x'$ and $\tau\colon y\rightarrow y'$ are isomorphisms in $A$ such that $f(\rho) = f(\tau)$ and $\varphi'\circ\rho = \tau\circ\varphi$. The composition in $P_CA$ is given componentwise, \textit{i.e.} $(\rho',\tau')\circ (\rho,\tau)$ is $(\rho'\circ\rho,\tau'\circ\tau)$ whenever it makes sense. The inverse of $(\rho,\tau)$ is $(\rho^{-1},\tau^{-1})$. Define the involution $\pi_CA$ as follows 
$$\fsix{\pi_CA\colon P_CA}{P_CA}{(x,y,\varphi)}{(\alpha(x),\alpha(y),\alpha(\varphi))}{(\rho,\tau)}{(\alpha(\rho),\alpha(\tau))}$$
, where $\alpha$ is the involution on $A$. We define $\delta_1$, $\delta_2$ in $\GGpd$ as the following morphisms 
$$\fsix{\delta_1\colon A}{P_CA}{x}{(x,x,1_x)}{\varphi}{(\varphi,\varphi)}$$
$$\fsix{\delta_2\colon P_CA}{A\times_C A}{(x,y,\varphi)}{(x,y)}{(\rho,\tau)}{(\rho,\tau)}$$.
The morphisms $\delta_1$ and $\delta_2$ are equivariant and $\delta = \delta_2\circ\delta_1$.

\begin{prop}
\label{prop:4.7}
	Given $f\colon A\rightarrow C$ in $\GGpd$, $P_CA$ is a very good path object with respect to the injective fibration structure on $\GGpd$.
\end{prop}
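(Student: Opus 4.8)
The goal is to exhibit the factorization $\delta=\delta_2\circ\delta_1$ constructed above as a factorization of the relative diagonal $\delta\colon A\to A\times_C A$ into an acyclic cofibration $\delta_1$ followed by a fibration $\delta_2$; this is exactly what it means for $P_CA$ to be a \emph{very good} path object with respect to the injective model structure. Thus the proof splits into two independent claims, one for each map, and the bulk of the work lies in the second.

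For $\delta_1\colon A\to P_CA$, since both cofibrations and weak equivalences of the injective model structure are detected objectwise, it suffices to analyse $\underline{\delta_1}$ in $\Gpd$. On objects $x\mapsto(x,x,1_x)$ is visibly injective, so $\underline{\delta_1}$ is injective-on-objects, i.e. an objectwise cofibration. To see it is also a weak equivalence I would check that $\underline{\delta_1}$ is an equivalence of groupoids: it is fully faithful because every morphism $(x,x,1_x)\to(x',x',1_{x'})$ of $P_CA$ is forced to have the shape $(\rho,\rho)$ with $\rho\colon x\to x'$, matching exactly $\Hom_A(x,x')$; and it is essentially surjective because any $(x,y,\varphi)$ is isomorphic to $(x,x,1_x)$ via $(1_x,\varphi)$, which is a legitimate morphism of $P_CA$ precisely because $f(\varphi)=\id$. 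Hence $\delta_1$ is an objectwise acyclic cofibration, so an acyclic cofibration.

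For $\delta_2\colon P_CA\to A\times_C A$ I would invoke \ref{prop:injfib} and check the right lifting property against the two maps $S(i)$ and $i'$. The map $S(i)$ is disposed of quickly: $\underline{\delta_2}$ is an isofibration of groupoids, since given $(x,y,\varphi)$ and an isomorphism $(\rho,\tau)\colon(x,y)\to(x',y')$ downstairs, the tuple $(x',y',\tau\circ\varphi\circ\rho^{-1})$ lifts it, and $f(\tau\circ\varphi\circ\rho^{-1})=\id$ follows from $f(\rho)=f(\tau)$ together with $f(\varphi)=\id$. Thus $\delta_2$ is an objectwise fibration and so has the right lifting property against the generating projective acyclic cofibration $S(i)$ (cf. \ref{notn:3.3}).

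The substantive step, and the expected obstacle, is the right lifting property against $i'$, which I would handle by the explicit filler construction already rehearsed in \ref{lem:4.4}. From a square with top map $h\colon\cI\to P_CA$ and bottom map $g\colon\tr\to A\times_C A$, equivariance of $h$ forces $h(0)=(x,y,\varphi)$, $h(1)=(\al x,\al y,\al\varphi)$ and $h(\phi)=(\rho,\tau)$, while commutativity gives $g(\phi)=(\rho,\tau)$; writing $g(\psi)=(\sigma,\omega)$ and $g(2)=(u,v)$, and using that $g(2)$ is a fixed point together with equivariance of $g$ through $\upsilon(\psi)=\psi\circ\phi$, yields the relations $\al\sigma=\sigma\rho$ and $\al\omega=\omega\tau$. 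I would then define the filler $j$ to agree with $h$ on $\cI$ and set $j(2)\coloneqq(u,v,\theta)$ with $\theta\coloneqq\omega\circ\al\varphi\circ\sigma^{-1}$ and $j(\psi)\coloneqq(\sigma,\omega)$. The remaining verifications are routine but delicate bookkeeping with the $\Z2$-action: that $f(\theta)=\id$, so $(u,v,\theta)\in P_CA$; that $(\sigma,\omega)$ is genuinely a morphism of $P_CA$, whose compatibility equation is precisely the defining relation for $\theta$; that $j(2)$ is fixed by $\pi_CA$, i.e. $\al\theta=\theta$, which follows from $\al\sigma=\sigma\rho$, $\al\omega=\omega\tau$ and the morphism relation $\al\varphi\circ\rho=\tau\circ\varphi$ carried by $h(\phi)$; and that $j$ is equivariant, which is again the identities $\al\sigma=\sigma\rho$, $\al\omega=\omega\tau$. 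Assembling these produces an equivariant diagonal filler $j$, so $\delta_2$ has the right lifting property against $i'$ and is therefore a fibration, completing the verification that $P_CA$ is a very good path object.
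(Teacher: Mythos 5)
Your proposal is correct and follows essentially the same route as the paper's own proof: you show $\delta_1$ is an objectwise acyclic cofibration (injective on objects, fully faithful, essentially surjective), handle $S(i)$ via the isomorphism-lifting property of $\underline{\delta_2}$ with the same lift $(x',y',\tau\circ\varphi\circ\rho^{-1})$, and solve the lifting problem against $i'$ with the same explicit equivariant filler, namely $j(\psi)=(\sigma,\omega)$ landing at the fixed point $(u,v,\omega\circ\al\varphi\circ\sigma^{-1})$, using the identities $\al\sigma=\sigma\rho$, $\al\omega=\omega\tau$ and $\al\varphi\circ\rho=\tau\circ\varphi$ exactly as the paper does. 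The only (immaterial) differences are notational and your choice of witness $(1_x,\varphi)\colon(x,x,1_x)\to(x,y,\varphi)$ for essential surjectivity where the paper uses $(\varphi^{-1},1_y)$ from $(y,y,1_y)$.
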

\begin{proof}
\label{proof:prop:4.7}
	We have to prove that $\delta_1$ is an acyclic cofibration and $\delta_2$ is a fibration \textit{wrt.} the injective model structure. We start with $\delta_1$. It suffices to prove that $\underline{\delta_1}$ is an acyclic cofibration of groupoids. Clearly, it is an injective-on-objects functor. Moreover, $\delta_1$ is essentially surjective. Indeed, let $(x,y,\varphi)$ be an element of $P_CA$, then $(\varphi^{-1},1_y)$ is an isomorphism in $P_CA$ between $\delta_1(y) = (y,y,1_y)$ and $(x,y,\varphi)$.
	It remains to prove that $\delta_1$ is a fully faithful functor. But, for every morphism $(\rho,\tau)\colon (x,x,1_x)\rightarrow (y,y,1_y)$ in $P_CA$, one has $\rho = \tau$. We conclude that the induced map $A(x,y)\rightarrow P_CA(\delta_1(x),\delta_1(y))$ is a bijection for every pair $(x,y)\in A^2$.\\
	Now, we prove that $\delta_2$ is a fibration. First, the morphism $\delta_2$ has the right lifting property \textit{wrt.} $S(i)$. Indeed, $\delta_2$ is a projective fibration, in other words $\underline{\delta_2}$ has the isomorphism-lifting property. Let $(\rho,\tau)\colon (x,y)\rightarrow (x',y')$ be an isomorphism in $A\times_C~A$, $\varphi\colon x\rightarrow y$ an isomorphism such that $f(\varphi)$ is the identity, then $(\rho,\tau)\colon (x,y,\varphi)\rightarrow (x',y',\tau\circ\varphi\circ\rho^{-1})$ is an isomorphism in $P_CA$ above $(\rho,\tau)$. \\
	Second, consider the following lifting problem 
	$$\xymatrix{\cI \ar@{^{(}->}[d]_{i'}\ar[r]^g & P_CA \ar[d]^{\delta_2} \\ \tr \ar[r]_-h & A\times_C A}$$.
	We define a diagonal filler $j$ as follows.
	Take $j(\phi) = g(\phi)$. Let us assume that $g(\phi)$ is $(\rho,\tau)\colon (x,y,\varphi)\rightarrow (\alpha(x),\alpha(y),\alpha(\varphi))$.
	Since $g$ is equivariant, note that $(\rho^{-1},\tau^{-1}) = (\alpha(\rho),\alpha(\tau))$ and $\alpha(\varphi)\circ\rho = \tau \circ\varphi$.
	Moreover, let $h(2)$ be $(x',y')$ and $h(\psi)$ be $(\rho',\tau')\colon (\alpha(x),\alpha(y))\rightarrow (x',y')$. 
	The reader can easily check that $(\rho',\tau')$ is an isomorphism in $P_CA$ from $(\al(x),\al(y),\al(\varphi))$ to $(x',y',\tau'\circ\al(\varphi)\circ \rho'^{-1})$, with this last point being fixed under the involution $\pi_CA$ (indeed, since $h$ is equivariant, note that $\al(\rho') = \rho'\circ\rho$ and $\al(\tau') = \tau'\circ\tau$). We take $j(\psi)\coloneqq (\rho',\tau')$. \\
	So, by \ref{prop:injfib} $\delta_2$ is a fibration.
\end{proof}

\begin{prop}
\label{prop:pathfibrancy}
	If $f\colon A\rightarrow C$ is a fibration and $A$ is fibrant with respect to the injective model structure on $\GGpd$, then $P_CA$ is a fibrant object.
\end{prop}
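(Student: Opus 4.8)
The plan is to deduce the fibrancy of $P_CA$ from the fibration $\delta_2\colon P_CA\rightarrow A\times_C A$ already established in Proposition \ref{prop:4.7}. Concretely, I would show that $A\times_C A$ is itself a fibrant object and then appeal to the closure of fibrations under composition: the composite $P_CA\xrightarrow{\delta_2} A\times_C A\rightarrow\1$ will then be a fibration, which is exactly the statement that $P_CA$ is fibrant.

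First I would argue that $A\times_C A$ is fibrant. The square defining $A\times_C A$ exhibits one of its projections $\pi\colon A\times_C A\rightarrow A$ as the pullback of the fibration $f\colon A\rightarrow C$ along $f$ itself. Since injective fibrations are stable under pullback (this holds in any model category, and $\GGpd$ with the injective model structure is one), $\pi$ is a fibration. Because $A$ is assumed fibrant, the map $A\rightarrow\1$ is a fibration as well, so the composite $A\times_C A\rightarrow A\rightarrow\1$ is a fibration and $A\times_C A$ is fibrant. Composing with $\delta_2$ from Proposition \ref{prop:4.7} then gives that $P_CA\rightarrow\1$ is a fibration, as desired.

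I do not expect a genuine obstacle here: the entire argument rests only on the stability of injective fibrations under pullback and composition, together with the two hypotheses that $f$ is a fibration and $A$ is fibrant. The one point worth flagging is that one must invoke pullback-stability of fibrations for the \emph{injective} structure specifically; should a self-contained verification be preferred, one could instead check directly, via Proposition \ref{prop:injfib}, that $A\times_C A\rightarrow\1$ has the right lifting property against $S(i)$ and $i'$, but the route through composition of fibrations is considerably shorter and cleaner.
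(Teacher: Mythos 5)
Your proof is correct and follows essentially the same route as the paper's: both arguments note that $pr_1\colon A\times_C A\to A$ is a fibration as a pullback of the fibration $f$, use the fibrancy of $A$, and compose with the fibration $\delta_2$ from Proposition \ref{prop:4.7} to exhibit $P_CA\to\1$ as a composite of fibrations. The only difference is cosmetic: you isolate the fibrancy of $A\times_C A$ as an intermediate claim, while the paper assembles the same three fibrations in a single diagram.
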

\begin{proof}
\label{proof:prop:pathfibrancy}
	First, since the class of fibrations is closed under pullbacks and $f$ is a fibration, the first projection $pr_1\colon A\times_C A\to A$ is a fibration. Then, using the facts that $A$ is fibrant, the class of fibrations is closed under compositions, and the fact that $\delta_2$ is a fibration as a result of \ref{prop:4.7}, we conclude by considering the following commutative diagram
	$$\xymatrix{P_CA \ar@{->>}[d]_{\delta_2}\ar@{-->}[rdd] & \\
		A\times_C A \ar@{->>}[d]_{pr_1} & \\
		A \ar@{->>}[r] & \1}$$.
\end{proof}

\begin{rmk}
\label{rmk:4.9}
	The proposition \ref{prop:pathfibrancy} proves, under the assumption that $f\colon A\rightarrow C$ is a fibration in $(\GGpd)\f$, that $P_CA$ is really a path object for our type-theoretic fibration structure on $(\GGpd)\f$.  
\end{rmk}

In the case where $C\coloneqq \1$, $A\coloneqq U$, and $f\colon A\rightarrow C$ is the unique morphism from $U$ to $\1$, $U$ being fibrant $P_1U$ lives in $(\GGpd)\f$ as noted in \ref{rmk:4.9}, and tracing through the interpretation of type theory we find that the space E of equivalences over $U\times U$ is isomorphic to $P_1U$.

\begin{cor}
\label{thm:univalence}
	In the type-theoretic fibration structure on $(\GGpd)\f$ given in section \ref{sec:ttfc}, the universe $p\colon \widetilde{U} \rightarrow U$ satisfies the univalence property. 
\end{cor}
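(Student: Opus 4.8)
The plan is to deduce univalence directly from the path object $P_{\1}U$ already constructed, together with the identification of the fibration of equivalences with $P_{\1}U$. Recall that $p$ is univalent exactly when the natural map $U \to E$, sending each small type to its identity equivalence, is a right homotopy equivalence, where $E \fib U\times U$ is the fibration representing the type of equivalences. Since $\1$ is terminal we have $U\times_{\1}U = U\times U$, so \ref{prop:4.7} supplies the factorization $U \xrightarrow{\delta_1} P_{\1}U \xrightarrow{\delta_2} U\times U$ of the diagonal $\delta$ into an acyclic cofibration $\delta_1$ followed by a fibration $\delta_2$; moreover $U$ is fibrant by \ref{lem:4.5} and $P_{\1}U$ is fibrant by \ref{prop:pathfibrancy}, so this factorization lives inside $(\GGpd)\f$ and realizes the factorization of $\delta$ appearing in the formulation of univalence. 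Combined with the isomorphism $E \cong P_{\1}U$ over $U\times U$ established just above, the whole univalence square is thereby expressed in terms of $P_{\1}U$, and it suffices to show that $U\to E$ is a right homotopy equivalence.

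The first substantive step is to pin down the natural map $U\to E$ under the isomorphism $E\cong P_{\1}U$. A point of $U$ is a triple $(A,B,\varphi)$, and $\delta_1$ sends it to the reflexivity datum $((A,B,\varphi),(A,B,\varphi),1)$; tracing through the interpretation of the identity type, this datum is precisely the identity equivalence of the type classified by $(A,B,\varphi)$. Hence, under $E\cong P_{\1}U$, the map $U\to E$ is carried to $\delta_1\colon U \to P_{\1}U$. The point requiring care is the compatibility of the isomorphism $E\cong P_{\1}U$ with these two distinguished sections of $U$, namely that identity equivalences match reflexivity paths; but this compatibility is exactly how the identification $E\cong P_{\1}U$ was produced in the preceding paragraph, so no new work is needed here.

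It then remains to observe that $\delta_1$ is a right homotopy equivalence, which is the standard fact that an acyclic cofibration between fibrant objects is a right homotopy equivalence. Concretely, lifting $\delta_1$ against the fibration $P_{\1}U \to \1$ yields a retraction $r$ with $r\delta_1 = \mathrm{id}_U$; then, choosing any path object $Q$ of $P_{\1}U$ with reflexivity $s\colon P_{\1}U \to Q$ and endpoint fibration $(q_0,q_1)\colon Q \to P_{\1}U\times P_{\1}U$, a lift of $\delta_1$ against $(q_0,q_1)$ in the square whose top edge is $s\delta_1$ and whose bottom edge is $(\delta_1 r,\mathrm{id}_{P_{\1}U})$ produces a right homotopy $\delta_1 r \sim \mathrm{id}_{P_{\1}U}$. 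Thus $\delta_1$, and therefore the natural map $U\to E$, is a right homotopy equivalence, which is exactly the univalence of $p$. The genuine difficulty of the argument lies entirely upstream --- in the construction of $P_{\1}U$, the proof in \ref{prop:4.7} that $\delta_2$ is a fibration, and the identification $E\cong P_{\1}U$ --- after which the corollary follows formally.
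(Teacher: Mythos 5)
Your proposal is correct and takes essentially the same route as the paper: both identify the map $U\to E$ with $\delta_1$ via the isomorphism $E\cong P_{\1}U$, and then conclude because $\delta_1$ is an acyclic cofibration between fibrant objects (\ref{prop:4.7}, \ref{lem:4.5}), hence a right homotopy equivalence --- a fact the paper simply cites where you spell out the retraction-and-homotopy argument. One minor slip worth fixing: the retraction $r$ is obtained by lifting $\delta_1$ against the fibration $U\to\1$ (a fibration precisely because $U$ is fibrant, \ref{lem:4.5}), not against $P_{\1}U\to\1$.
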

\begin{proof}
\label{proof:thm:univalence}
	Recall (\textit{cf.} the beginning of section \ref{sec:univalentuniv}) that we have to prove that the upper horizontal arrow in the following commutative diagram
	$$\xymatrix{U \ar[r]\ar@{ >->}[d]^{\rotatebox[origin=c]{90}{$\sim$}} & E \ar@{->>}[d] \\ PU \ar@{->>}[r] & U\times U}$$,
	which maps a small type to its identity equivalence, is a right homotopy equivalence. But this morphism is isomorphic to $\delta_1$, so by \ref{prop:4.7} and \ref{lem:4.5} it is an acyclic cofibration with a fibrant domain, hence a right homotopy equivalence.
\end{proof}

\section{Conclusion}
\label{sec:conclusion}

Our new interpretation of the Univalent Foundations in the category $[\mathbf{B}(\mathbb{Z}_2), \Gpd]$ is an incremental progress in the direction of finding new type-theoretic fibration categories together with a universe satisfying the Univalence Axiom using the injective model structure on functor categories. This new model together with its previous twin model using the projective model structure on $[\mathbf{B}(\mathbb{Z}_2), \Gpd]$ provides a counterexample to Shulman's \emph{model invariance problem} by showing that two Quillen equivalent model categories can host different interpretations of type theory. So, a Quillen equivalence between model categories is not trivial in the context of type theory and can make a difference with respect to the interpretation of the type theory under consideration. Last, a conjecture to the effect that ``equivalent homotopy theories have equivalent internal type theories'' should not mention model categories but type-theoretic fibration categories and an adequate notion of equivalence thereof.

\bibliographystyle{alpha}
\bibliography{all}
	
\end{document}